\newtheorem{thm}{Theorem}[section]
\newtheorem{prop}[thm]{Proposition}
\newtheorem{lem}[thm]{Lemma}
\newtheorem{cor}[thm]{Corollary}
\theoremstyle{remark}
\newtheorem{rem}[thm]{Remark}
\newtheorem{remark}[thm]{Remark}
\theoremstyle{definition}
\newtheorem{defn}[thm]{Definition}
\DeclareMathOperator{\Spc}{Spin^c}
\newcommand{\dist}{dist}
\newcommand{\iso}{\cong}
\newcommand{\C}{\mathbb{ C}}
\newcommand{\N}{{\mathbb {N}}}
\newcommand{\Z}{\mathbb{ Z}}
\newcommand{\lra}{\longrightarrow}
\newcommand{\BB}{\mathbb{ B}}
\newcommand{\Q}{\mathbb{ Q}}
\newcommand{\R}{\mathbb{ R}}
\newcommand{\KK}{\mathbb{ K}}
\newcommand{\ra}{{\rightarrow}}
\date{29 February 2008; MSC 2000: primary 53C23; secondary 55N99, 19K35, 19K56}
\begin{document}

\title{Coarse topology, enlargeability, and essentialness}
\author{B. Hanke}
\address{Mathematisches Institut, Ludwig-Maximilians-Universit\"at M\"unchen,
Theresienstr.~39, 80333 M\"unchen, Germany}
\email{hanke{\char'100}math.lmu.de}
\author{D. Kotschick}
\address{Mathematisches Institut, Ludwig-Maximilians-Universit\"at M\"unchen,
Theresienstr.~39, 80333 M\"unchen, Germany}
\email{dieter{\char'100}member.ams.org}
\author{J. Roe}
\address{Department of Mathematics, Pennsylvania State University, 
107 McAllister Building, University Park, PA 16802, USA}
\email{roe{\char'100}math.psu.edu}
\author{T. Schick}
\address{Mathematisches Institut, Universit\"at G\"ottingen, 
Bunsenstr.~3, 37073 G\"ottingen, Germany}
\email{schick{\char'100}uni-math.gwdg.de}

\begin{abstract}
Using methods from coarse topology we show that fundamental classes of closed enlargeable manifolds 
map non-trivially both to the rational homology of their fundamental  groups and to the $K$-theory of the
corresponding reduced $C^*$-algebras. Our proofs do not depend on the Baum--Connes conjecture and 
provide independent confirmation for specific predictions derived from this conjecture. 
\end{abstract}

\maketitle

\begin{otherlanguage}{french}

\begin{center}
{\bf Topologie \`a grande \'echelle, agrandissabilit\'e et non-annulation en homologie}
\end{center}

\begin{abstract}
En utilisant des m\'ethodes de  topologie \`a grande \'echelle, on prouve que les classes fondamentales
des vari\'et\'es agrandissables ne s'annulent pas, ni dans l'homologie rationelle de leurs 
groupes fondamentaux, ni dans la $K$-th\'eorie des $C^*$-alg\`ebres r\'eduites correspondantes. 
Nos r\'esultats ne d\'ependent pas de la conjecture de Baum--Connes, et confirment 
de fa\c con ind\'ependante certaines cons\'equences de cette conjecture.
\end{abstract}

\end{otherlanguage}

\bigskip

%\newpage

\section{Introduction and statement  of results}\label{s:intro}

In this paper we use methods from coarse topology to prove certain homological properties of enlargeable manifolds. 
The defining property of this class of manifolds is that they admit covering spaces that are uniformly large in all directions.
The intuitive geometric meaning of enlargeability is naturally captured by concepts of coarse topology, in particular by 
the notion of macroscopic largeness. We proceed by showing that enlargeability implies macroscopic largeness, which 
in turn implies homological statements in classical, rather than coarse, algebraic topology.

Using completely different methods, related results were previously proved in~\cite{HS,HS2}. We shall discuss the 
comparison between the two approaches later in this introduction, after setting up some of the terminology to
be used. Suffice it to say for now that our results here, unlike those of~\cite{HS,HS2}, are relevant to the Baum--Connes
conjecture for the reduced group $C^*$-algebra, in that we  verify specific predictions derived from this conjecture. 

\subsection*{Enlargeability}\label{ss:large}

Several versions of the notion of enlargeability or hypersphericity were introduced by Gromov and Lawson 
in~\cite{GL,GL2}. Here is the basic definition:
\begin{defn}\label{d:enl}
A closed oriented manifold $M$ of dimension $n$ is called {\it enlargeable} if for every $\epsilon >0$ there is a 
covering space $M_{\epsilon}\longrightarrow M$ that admits an $\epsilon$-contracting map 
\[
   f_{\epsilon}\colon M_{\epsilon} \longrightarrow (S^n, g_{can}) 
\]
to the $n$-sphere with its canonical metric, which is constant outside a compact set, and is of nonzero degree. 
\end{defn}
Here all covering spaces $M_{\epsilon}$ are given the pullback metrics induced by an arbitrary metric
on $M$. The choice of metric on $M$ matters only in that it has to be independent of $\epsilon$.

A variation on this definition is obtained by restricting the kind of covering space allowed for 
$M_{\epsilon}$. We shall call $M$ {\it universally enlargeable} if it is enlargeable and for all $\epsilon$ the 
covering $M_{\epsilon}$ can be taken to  be the universal covering $\widetilde{M}\longrightarrow M$. We shall
call $M$ {\it compactly enlargeable} if it is enlargeable and all $M_{\epsilon}$ can be taken to be compact,
equivalently to be finite-sheeted coverings.

\subsection*{Essentialness}\label{ss:ess}

Recall that Gromov~\cite{BC} called a closed oriented manifold $M$ {\it essential} if its fundamental class maps 
non-trivially to the rational homology of $B\pi_1(M)$ under the classifying map of its universal cover.  
It is natural to extend this definition to more general situations. For any homology theory $E$, we say
that an $E$-oriented manifold $M$ is {\it $E$-essential} if its orientation class maps non-trivially to $E_*(B\pi_1(M))$
under the classifying map of the universal covering.

In the context of coarse topology, one replaces the usual orientation class of $M$ by the orientation class
of the universal covering $\widetilde{M}$ in the coarse homology $HX_*(\widetilde{M})$, see Section~\ref{homology} below. 
Passing to the coarse homology of the universal covering is a procedure not unlike passing from $M$ to the classifying space of its
fundamental group, and the coarse fundamental class $[\widetilde{M}]_X$ may well vanish. We shall say that a manifold
$M$ (or its universal covering) is {\it macroscopically large} if it is essential for coarse homology, i.~e.~if 
$[\widetilde{M}]_X \neq 0\in HX_*(\widetilde{M})$.
In fact, Gromov suggested various versions of macroscopic largeness in~\cite{Large,AI,Macro}, and this definition, 
taken from~\cite{Gong}, is just one particular way of formalizing the concept.

We can now state our first main result.

\begin{thm}\label{t:first} 
{\rm (1)} Universally enlargeable manifolds are macroscopically large.

{\rm (2)} Macroscopically large manifolds are essential in rational homology.
\end{thm}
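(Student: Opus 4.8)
The plan is to prove the two parts by a single homological bootstrap, using the coarse homology of the universal covering as an intermediary between the geometric notion of enlargeability and the classical notion of essentialness. For part (1), let $M$ be a closed oriented $n$-manifold which is universally enlargeable, and let $\widetilde M \to M$ be the universal cover, equipped with the pullback metric. The $\epsilon$-contracting maps $f_\epsilon\colon \widetilde M \to (S^n,g_{can})$ of nonzero degree, constant outside a compact set, are by construction \emph{proper} (hence coarse) maps after collapsing $S^n$ to a point at infinity; more precisely, $f_\epsilon$ factors through a proper map $\widetilde M \to \R^n$ when restricted to the preimage of a hemisphere, and the degree condition forces this map to be nontrivial on coarse homology in top degree. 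The key point is that the $\epsilon$-contracting condition lets one represent the fundamental class $[\widetilde M]_X \in HX_n(\widetilde M)$ by cycles which $f_\epsilon$ does not kill: if $[\widetilde M]_X$ were zero, it would bound a coarse chain, and pushing that chain forward under $f_\epsilon$ — whose uniform contraction controls the propagation — would exhibit the nonzero multiple $\deg(f_\epsilon)\cdot[\R^n]_X$ as a boundary in $HX_n(\R^n)\cong\Z$, a contradiction. So $[\widetilde M]_X\neq 0$, i.e.\ $M$ is macroscopically large.

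For part (2), suppose $M$ is macroscopically large, so $[\widetilde M]_X\neq 0\in HX_n(\widetilde M)$. Here I would invoke the standard comparison between coarse homology and the (locally finite, or Borel–Moore) homology of the universal covering together with the structure of $\widetilde M$ as a free cocompact $\pi=\pi_1(M)$-space. The coarse homology $HX_*(\widetilde M)$ receives a map from the equivariant Borel–Moore homology $H^{\mathrm{lf}}_*(\widetilde M)^\pi$, and there is a transfer/assembly-type relationship identifying the relevant portion with the group homology $H_*(\pi;\Z)$; under this, the coarse fundamental class $[\widetilde M]_X$ corresponds to the image of the ordinary fundamental class $[M]$ under the classifying map $c\colon M\to B\pi$. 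Concretely, one uses the Rips-complex (or anti-Čech) model for $HX_*$ and the fact that, for a free cocompact action, the Rips complexes are $\pi$-equivariantly homotopy equivalent to a tower converging to $E\pi$, so $HX_*(\widetilde M)$ maps to $H_*(B\pi)$ with the coarse fundamental class going to $c_*[M]$. Nonvanishing of $[\widetilde M]_X$ (at least rationally) then forces $c_*[M]\neq 0$ in $H_n(B\pi;\Q)$, which is exactly essentialness.

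The steps, in order: (i) set up the coarse homology $HX_*$ of a proper metric space and recall $HX_n(\R^n)\cong\Z$ detecting the coarse fundamental class; (ii) show a proper uniformly expanding/contracting map of nonzero degree to $\R^n$ (or to $S^n$ with a point at infinity) is injective on $HX_n$ in the appropriate sense, and deduce part (1); (iii) build the comparison map $HX_*(\widetilde M)\to H_*(B\pi;\Q)$ for a cocompact universal cover via Rips complexes, tracking the image of the fundamental class; (iv) conclude part (2). The main obstacle I anticipate is step (iii): making precise and natural the identification of the coarse fundamental class of the cocompact space $\widetilde M$ with $c_*[M]$ under the comparison with group homology — one must be careful about which variant of coarse homology (with or without supports, the role of the $\pi$-action, rational coefficients) makes the diagram commute, and about the fact that coarse homology of a contractible cocompact space need not simply equal $H_*(B\pi)$ but maps to it in a way that is surjective onto the part carrying the fundamental class. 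Step (ii) is the geometrically substantive but technically routine part — it is essentially the coarse-geometric reformulation of the Gromov–Lawson argument that enlargeable spin manifolds carry no positive scalar curvature metric, with the index theory replaced by a direct chain-level argument in $HX_*$.
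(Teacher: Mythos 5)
Both halves of your proposal have genuine gaps. In part (1), the single-scale argument does not work: a map $f_\epsilon\colon\widetilde M\to S^n$ that is constant outside a compact set is \emph{not} proper (the preimage of any neighbourhood of the base point is all of $\widetilde M$ minus a compact set), so it is not a coarse map to $S^n$, and it does not ``factor through a proper map to $\R^n$'' under any identification --- properness toward $\R^n$ would require points going to infinity in $\widetilde M$ to go to infinity in $\R^n$, whereas $f_\epsilon$ sends them all to one point. Consequently there is no induced map on $HX_n$ and no contradiction in $HX_n(\R^n)$. Moreover, even heuristically, a fixed $\epsilon$ cannot control the situation: vanishing of $[\widetilde M]_X$ only produces a bounding chain at some unknown coarse scale, which a fixed $\epsilon$-contracting map into a bounded target cannot see. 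The paper's key device is precisely to use \emph{all} scales at once: for each $i$ one takes a $1$-contracting map $f_i\colon\widetilde M\to S^n(i)$ (sphere of radius $i$), precomposed with deck transformations so that the compact supports $K_i$ march off to infinity, and glues these into a single proper, large-scale Lipschitz map $\phi\colon\widetilde M\to B^n$ into the connected balloon space (spheres of radius $i$ attached along a ray). Then one computes $HX_n(B^n)\cong\bigl(\prod_i\Q\bigr)/\bigl(\bigoplus_i\Q\bigr)$ and checks that $\phi_*[\widetilde M]_X$ is the class of the degree sequence $(d_1,d_2,\dots)$, which is nonzero since every $d_i\neq 0$; a bounding chain at any fixed scale can only affect finitely many spheres, which is exactly what the quotient by $\bigoplus\Q$ encodes. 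Without some such ``all scales simultaneously'' construction your part (1) cannot be repaired.

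In part (2), the comparison map you want, $HX_*(\widetilde M)\to H_*(B\pi_1(M);\Q)$ sending $[\widetilde M]_X$ to $c_*[M]$, is not constructed and is the wrong way round: the natural maps go from compactly supported homology of $M$ (or of $B\pi_1(M)$) via transfer and coarsening into $H^{lf}_*(\widetilde M)$ and then $HX_*(\widetilde M)$, and there is no general procedure pushing a locally finite class on a cocompact cover down to an ordinary class on the quotient (summing coefficients over a $\pi$-orbit of simplices diverges --- already for the fundamental class itself), nor do the Rips/anti-\v{C}ech complexes converge to $E\pi$ rather than a proper-action model. The paper sidesteps this entirely by proving the contrapositive: if $c_*[M]=0$ in $H_n(B\pi_1(M);\Q)$, then it already bounds a chain $b$ in some \emph{finite} subcomplex $S\supset M$ with $\pi_1(S)=\pi_1(M)$; lifting $b$ to $\widetilde S$ gives a locally finite chain, at a fixed scale, whose boundary is the lifted fundamental cycle, so $[\widetilde M]_X$ dies in $HX_n(\widetilde S)$, and since $\widetilde M\hookrightarrow\widetilde S$ is a coarse equivalence, $[\widetilde M]_X=0$. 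I recommend you reorganize part (2) along these lines rather than trying to build the co-assembly map you sketch, and rebuild part (1) around a balloon-type target.
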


There are results by Dranishnikov~\cite{D} addressing the  converse to the first part of this theorem. 
He has shown that other notions of macroscopic largeness sometimes imply versions of enlargeability.

Combining the two implications in Theorem~\ref{t:first}, we obtain:

\begin{cor}\label{c:ee}
Universally enlargeable manifolds are essential in rational homology.
\end{cor}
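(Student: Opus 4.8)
The plan is to read the corollary off directly from the two implications in \thmref{t:first}. Let $M$ be a closed universally enlargeable manifold of dimension $n$. By part~(1) of \thmref{t:first}, $M$ is macroscopically large, that is, the coarse fundamental class $[\widetilde{M}]_X$ of the universal covering is nonzero in $HX_*(\widetilde{M})$. By part~(2) of \thmref{t:first}, any macroscopically large manifold is essential in rational homology, so the image of the fundamental class $[M]$ in $H_n(B\pi_1(M);\Q)$ under the classifying map of the universal cover is nonzero. This is exactly the assertion of the corollary, so the proof consists of nothing more than concatenating the two statements.

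The one point worth a remark is the matching of hypotheses. Macroscopic largeness, as defined in \secref{s:intro}, is a property of the \emph{universal} covering, detected by $[\widetilde{M}]_X \in HX_*(\widetilde{M})$; accordingly, in order to invoke part~(1) one must know that in Definition~\ref{d:enl} the covers $M_\epsilon$ can all be taken to be $\widetilde{M}$, which is precisely what ``universally enlargeable'' means. (For a manifold that is only compactly enlargeable, or enlargeable in the general sense, one cannot argue this way, and a separate argument would be required.) Conversely, part~(2) is exactly the bridge that converts the coarse-homological nonvanishing of $[\widetilde{M}]_X$ into the classical rational-homological nonvanishing of the image of $[M]$ in $H_*(B\pi_1(M);\Q)$, so no further comparison is needed to combine the two halves.

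Since the corollary is a formal consequence of \thmref{t:first}, there is no genuine obstacle in proving it: the substance lies entirely in \thmref{t:first} itself, which we are free to assume here. For orientation, that substance amounts to: for part~(1), coarsifying the $\epsilon$-contracting maps $f_\epsilon\colon \widetilde{M}\to S^n$ and showing that, taken together, they witness $[\widetilde{M}]_X\neq 0$ in coarse homology; and for part~(2), relating the coarse homology of $\widetilde{M}$ to the ordinary homology of $B\pi_1(M)$ in such a way that nonvanishing of $[\widetilde{M}]_X$ forces nonvanishing of the image of $[M]$.
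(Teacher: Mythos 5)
Your proof is correct and is exactly the paper's own argument: Corollary~\ref{c:ee} is obtained by simply concatenating the two implications of Theorem~\ref{t:first}, applying part~(1) to the universal covering and then part~(2). Your remark on matching hypotheses (that macroscopic largeness concerns $\widetilde{M}$, which is why universal enlargeability is needed) is accurate and consistent with the paper.
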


That compactly enlargeable manifolds are essential was conjectured by Burghelea~\cite[Problem 11.1]{Schu}
quite some time ago and was proved fairly recently by Hanke and Schick~\cite{HS}, using index theory and 
the $K$-theory of $C^*$-algebras. One of the motivations for the present paper was the wish to give a direct 
and elementary proof of such a result, which does not use index theory and $K$-theory. After we achieved this
goal by finding the proof of Theorem~\ref{t:first} given in Section~\ref{proof1} below, it turned out that the 
sophisticated methods of~\cite{HS} can also be adapted to the consideration of infinite covers~\cite{HS2}.

While the ideas involved in our proof of Theorem~\ref{t:first} are indeed geometric and elementary, they do fit 
naturally into the framework of coarse homology, which we recall in Section~\ref{homology} following the 
books~\cite{R,Roe}. Our argument makes essential use of the coarse space 
\[
   B^n = [0,\infty) \bigcup_{\{1,2,3,\ldots\}} \left( \cup_i S^n(i) \right) \ .
\]
This balloon space, sketched in Figure~\ref{fig:connectedballoons}, is a coarse analogue of the one-point union.
It is defined using a collection of $n$-spheres of increasing radii $i = 1,2,3, \ldots$, with the sphere of radius $i$ 
attached to the point $i \in [0, \infty)$ at the south pole of $S^n$, and is equipped with the path metric. 

\begin{figure}
\begin{center}
\epsfig{file=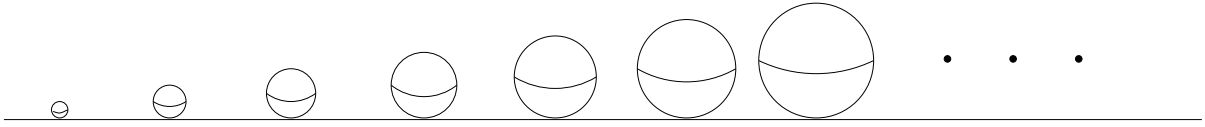,width=16cm}
\caption{The connected balloon space $B^n$}\label{fig:connectedballoons}
\end{center}
\end{figure}

The enlargeability assumption will be used to construct a coarse map  
\[
   \widetilde{M} \longrightarrow B^n
\]
that sends the coarse fundamental class of $\widetilde{M}$ to a nonzero class in the 
coarse homology of $B^n$ (see Proposition~\ref{p:central}). After this has been 
established, the proof of Theorem~\ref{t:first} can be completed quite easily.

\subsection*{Applications to the Baum--Connes map} 

After giving the proof of Theorem~\ref{t:first}, we proceed to use coarse topology to study the relation between enlargeability and the 
Baum--Connes assembly map in complex $K$-theory. This will lead us to some novel results on the Baum--Connes map that are 
interesting both in their own right and because of what they say about the relationship between various obstructions to the existence
of positive scalar curvature metrics. 

To formulate our results we  make the following definition.
\begin{defn}\label{BC} 
A closed $K$-theory oriented manifold $M$ is {\it Baum--Connes essential} if the image of its $K$-theoretic fundamental 
class under the composite map
\[
     K_*(M) \stackrel{c_*}{\longrightarrow} K_*(B \pi_1(M)) 
     \stackrel{\mu}{\longrightarrow}  K_*(C^*_{red} \pi_1(M))  
\]
is non-zero. Here, $c\colon M \longrightarrow B \pi_1(M)$ classifies 
the universal covering of $M$, and $\mu$ is the Baum--Connes assembly map. 
\end{defn}
In contrast to~\cite{HS,HS2}, we will work with the {\em reduced} group $C^*$-algebra throughout the present paper. 
We use the letter $K$ for the compactly supported complex $K$-homology defined by the $K$-theory spectrum. 
This is different from the 
convention in~\cite{R}, where $K_*$ denotes the analytically defined, hence  locally finite $K$-homology.  

Recall that a smooth manifold $M$ is orientable with respect to $K$-theory if and only if its tangent bundle admits a 
$\Spc$-structure. If $M$ is compact, then any choice of $\Spc$-structure determines a fundamental class $[M]$ in 
$K$-homology given by the corresponding Dirac operator, cf.~\cite[Chapter 11]{HR}. The image
\[
  \alpha(M) =  \mu\circ c_*([M]) \in K_*(C^*_{red} \pi_1(M)) 
\]
is given by the index of the $\Spc$ Dirac operator on $M$ twisted by the flat Hilbert module bundle 
\[
    \widetilde{M} \times_{\pi_1(M)} C^*_{red} \pi_1(M) \longrightarrow M 
\]
on $M$, as can be seen for example by a description of the Baum--Connes 
assembly map via Kasparov's $KK$-theory; cf.~\cite{Bl}. 

If a $\Spc$-structure on $M$ is induced by a spin structure, then the above construction can also be performed in real 
$K$-theory, leading to $\alpha_{\R}(M)\in KO_*(C^*_{red} \pi_1(M))$. In this case $\alpha (M)$ is the image of 
$\alpha_{\R}(M)$ under complexification. The Weitzenb{\"o}ck formula for the spin Dirac operator implies via the 
Lichnerowicz argument that if $M$ endowed with the fundamental class of a spin structure is Baum--Connes essential, 
then it does not admit a metric of positive scalar curvature. The Gromov--Lawson--Rosenberg conjecture predicts that 
the vanishing of $\alpha_{\R}(M)$ on a closed spin manifold $M$ is not only necessary, but also sufficient for the 
existence of a positive scalar curvature metric on $M$. Although this conjecture does not hold in general~\cite{dwyer,Schick}, 
it is expected that $\alpha_{\R}(M)$ captures all index-theoretic obstructions to the existence of a positive scalar curvature 
metric on $M$. This expectation is based in part on the relationship between the Gromov--Lawson--Rosenberg
conjecture and the Baum--Connes conjecture.

Recall~\cite{BCH} that the Baum--Connes conjecture claims that for any discrete group $\Gamma$, the 
assembly map 
\[ 
     K^\Gamma_*(\underline{E} \Gamma) \longrightarrow K_*(C^*_{red} \Gamma)
\]
is an isomorphism, where $\underline{E} \Gamma$ is the universal space for
proper $\Gamma$-actions, and $K_*$ denotes K-homology with compact
supports. The assembly map 
\[
   \mu\colon  K_*(B \Gamma) \longrightarrow K_*(C^*_{red} \Gamma) 
\]
considered above factors as 
\[
    K_*(B \Gamma) \stackrel{\cong}{\longrightarrow} K_*^\Gamma(E\Gamma) \stackrel{\gamma}{\longrightarrow} 
    K_*^\Gamma(\underline{E} \Gamma) \longrightarrow K_*(C^*_{red} \Gamma) \ ,
\]
where the first map is the canonical isomorphism between the equivariant $K$-theory of the free 
$\Gamma$-space $E\Gamma$ and the $K$-theory of the quotient $B\Gamma$ and $\gamma$ is 
induced by the canonical map $E\Gamma \to \underline{E} \Gamma$. Stolz~\cite{Ststable} has proved 
that if $M$ is spin and the Baum--Connes conjecture holds for the group $\pi_1(M)$, then the vanishing of $\alpha_{\R}(M)$ 
is sufficient for $M$ to stably admit a metric of positive scalar curvature. Here ``stably'' means that one 
allows the replacement of $M$ by its product with many copies of a Bott manifold $B$, which is any simply 
connected $8$-dimensional spin manifold with $\hat{A}(B)=1$. This result can be regarded as an instance of 
the universal nature of the index obstruction $\alpha_{\R}(M)$. 

It can be shown that the map $\gamma$ is rationally injective. Hence, if the Baum--Connes conjecture is true for 
$\pi_1(M)$, then $M$ is Baum--Connes essential if, and clearly only if, it is $K$-theory essential (for $K$-theory with
rational coefficients). 
In this direction, we shall prove the following unconditional result. We do not assume the Baum--Connes conjecture, 
which has not yet been verified in full generality and is actually expected to fail for some wild groups.

\begin{thm}\label{main} 
Every closed universally enlargeable $\Spc$-manifold is Baum--Connes essential.
\end{thm}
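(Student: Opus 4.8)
The plan is to recognize $\alpha(M)$ as a \emph{coarse} index and to detect its non-vanishing on the balloon space $B^n$, using Proposition~\ref{p:central} in place of the homological input that drives \thmref{t:first}. Write $\Gamma=\pi_1(M)$ and $n=\dim M$, fix a $\Gamma$-invariant metric and the lifted $\Spc$-structure on $\widetilde M$, and let $\widetilde D$ be the $\Spc$-Dirac operator on $\widetilde M$. By the descent identification of the Baum--Connes assembly map with coarse index theory, applied to the \emph{equivariant} Roe algebra of the free cocompact $\Gamma$-action on $\widetilde M$ (cf.~\cite{R,Roe,HR}), the class $\alpha(M)\in K_*(C^*_{red}\Gamma)$ is the $\Gamma$-equivariant coarse index of $\widetilde D$, and its image under the canonical homomorphism $K_*(C^*_{red}\Gamma)\iso K_*\bigl(C^*(\widetilde M)^\Gamma\bigr)\to K_*(C^*\widetilde M)$ induced by the inclusion of the equivariant into the full Roe algebra is the non-equivariant coarse index $\operatorname{Ind}_X(\widetilde D)=\mu_X\bigl([\widetilde M]_{KX}\bigr)$. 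Here $[\widetilde M]_{KX}\in KX_n(\widetilde M)$ is the coarsening of the $K$-homology fundamental class of $\widetilde M$ carried over from $[M]\in K_n(M)$, and $\mu_X$ is the coarse assembly map. Since this is \emph{one} homomorphism applied to $\alpha(M)$, it suffices to show $\operatorname{Ind}_X(\widetilde D)\neq0$.

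Because $M$ is \emph{universally} enlargeable, Proposition~\ref{p:central} supplies a coarse map $F\colon\widetilde M\to B^n$, assembled from the $\epsilon$-contracting maps $\widetilde M\to S^n$ (using that each is constant off a compact set and letting the sphere radii tend to infinity). By naturality of the coarse assembly map,
\[
  F_*\operatorname{Ind}_X(\widetilde D)=\mu_X\bigl(F_*[\widetilde M]_{KX}\bigr)\in K_n(C^*B^n),
\]
so it is enough to prove the right-hand side is non-zero. First I would check that $F_*[\widetilde M]_{KX}\neq0$ already in $KX_n(B^n)$: applying the degree-$n$ component of the coarse Chern character $KX_n(-)\to HX_n(-;\Q)$ — natural for coarse maps, compatible with coarsening, and sending a $\Spc$ fundamental $K$-homology class to the ordinary fundamental class because the Todd class is a unit in degree $0$ — one gets $\operatorname{ch}_n\bigl(F_*[\widetilde M]_{KX}\bigr)=F_*[\widetilde M]_X$, which is non-zero (the underlying class in $HX_n(B^n)$ is the sequence of non-zero degrees $\deg f_{\epsilon_i}$, hence of infinite order and non-zero rationally) by the computation behind Proposition~\ref{p:central}. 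Hence $F_*[\widetilde M]_{KX}\neq0$.

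It remains to see that $\mu_X$ does not annihilate $F_*[\widetilde M]_{KX}$, i.e.\ a (weak) coarse Baum--Connes statement for $B^n$. One route: $B^n$ has finite asymptotic dimension — the ray $[0,\infty)$ contributes $1$, and each sphere $S^n(i)$, covered with uniformly bounded multiplicity, contributes at most $n$, so $\operatorname{asdim}B^n\le\max(n,1)$ — and, after replacing $B^n$ by a coarsely equivalent space of bounded geometry, satisfies the coarse Novikov conjecture, so that $\mu_X$ is (split) injective on $KX_n(B^n)$; combined with the previous step this gives $\mu_X(F_*[\widetilde M]_{KX})\neq0$. Alternatively one computes $K_n(C^*B^n)$ directly from the coarse Mayer--Vietoris sequence for the decomposition $B^n=[0,\infty)\cup\bigl(\bigcup_iS^n(i)\bigr)$: the ray is flasque, so $K_*(C^*[0,\infty))=0$, and one identifies $K_n(C^*B^n)$ with the resulting product-type group built from the $\widetilde K_n(S^n(i))\iso\Z$, under which $F_*\operatorname{Ind}_X(\widetilde D)$ becomes the sequence $(\deg f_{\epsilon_i})_i$ of degrees of the $\epsilon_i$-contracting maps — non-zero since every degree is non-zero. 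Either way $\operatorname{Ind}_X(\widetilde D)\neq0$, and therefore $\alpha(M)\neq0$, i.e.\ $M$ is Baum--Connes essential.

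I expect the real obstacle to be exactly this last step. The group $K_*(C^*B^n)$ is governed by an infinite-product (and potential $\lim^1$) phenomenon coming from the spheres being individually bounded but of radii tending to infinity; controlling the coarse assembly map through it — either by carefully matching it to the homological computation of Proposition~\ref{p:central} via the coarse Chern character, or by a direct calculation — is the technical heart, the $K$-theoretic analogue of the bookkeeping already done for coarse homology. A lesser point is pinning down the descent identification of $\alpha(M)$ with the coarse index for the \emph{reduced} $C^*$-algebra used throughout this paper, but this is standard.
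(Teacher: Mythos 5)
Your proposal is correct and follows essentially the same route as the paper: identify $\alpha(M)$, via descent/transfer to the Roe algebra of $\widetilde M$, with the coarse index $\mu_\infty$ of the coarsened $K$-homology fundamental class, push it forward along the coarse map $\phi\colon\widetilde M\to B^n$ of Proposition~\ref{p:central}, invoke the coarse Baum--Connes isomorphism (injectivity) for $B^n$, and detect the image as the sequence of non-zero degrees in a $(\prod\Z)/(\bigoplus\Z)$-type group. Your second route for the final detection (direct Mayer--Vietoris computation, with $KX_n(B^n)\iso(\prod\Z)/(\bigoplus\Z)$ and $[M]_K$ mapping to $(d_i)$) is exactly what the paper does, and avoids the extra compatibility checks a coarse Chern character would require.
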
 

In Section~\ref{s:BC} we shall discuss the coarse version of the Baum--Connes conjecture. It will become clear that
if this were known to be true, then Theorem~\ref{main} would follow from the first part of Theorem~\ref{t:first}. 
However, our proof of Theorem~\ref{main} will bypass this issue.

In the spin case, Theorem~\ref{main} essentially shows that the Gromov--Lawson obstruction~\cite{GL,GL2} to 
the existence of positive scalar curvature provided by enlargeability is subsumed by the index-theoretic obstruction 
$\alpha_{\R}(M)$, and even by $\alpha (M)$. Hanke and Schick previously proved this for the corresponding invariant 
in the $K$-theory of the {\em maximal} $C^*$-algebra of $\pi_1(M)$; see~\cite[Theorem~1.2]{HS} and~\cite{HS2}. 
Our result here neither implies nor is it implied by that of~\cite{HS,HS2}. On the one hand, the canonical map 
\[
   K_*(C_{max}^* \pi_1(M)) \longrightarrow K_*(C_{red}^* \pi_1(M))
\]
is not always injective, so that our conclusion here is stronger than the one in~\cite{HS,HS2}. On the other 
hand, we also use a stronger assumption, enlargeability, which implies the assumption of area-enlargeability used
in~\cite{HS,HS2}. (For area-enlargeability, the $\epsilon$-contracting property of  $f_{\epsilon}$ is required not for 
lengths, but only for two-dimensional areas.)

Having shown that the enlargeability obstruction to the existence of positive scalar curvature metrics is indeed subsumed by 
the universal index obstruction $\alpha_{\R}(M)$, we want to go further and also show this for the obstruction derived from
$\hat{A}$-enlargeability in the sense of Gromov and Lawson~\cite{GL,GL2}. We slightly generalize this concept by considering
the following amalgamation of enlargeability and $K$-theoretic essentialness:
\begin{defn}\label{d:Aenl} 
A closed $\Spc$-manifold $M$ is called {\it $K$-theory enlargeable}
if there is an $n\in\N$ such that for every $\epsilon >0$ there is a covering space
$M_{\epsilon}\longrightarrow M$ that admits an $\epsilon$-contracting map 
\[
   f_{\epsilon}\colon M_{\epsilon} \longrightarrow (S^n, g_{can}) 
\]
to the $n$-sphere with its canonical metric, which maps the complement of a compact set to the base point (equal to 
the south pole, say)  $S\in S^n$, 
and sends the $K$-theoretic orientation class to a non-trivial element of $K_*(S^n,S)$.
\end{defn}
When $n$ is the dimension of $M$, this definition reduces to Definition~\ref{d:enl}.
We shall prove the following generalization of Theorem~\ref{main}:
\begin{thm}\label{t:main} 
Every $K$-theory enlargeable $\Spc$-manifold is Baum-Connes essential.
\end{thm}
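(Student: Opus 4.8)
The plan is to run the proof of \thmref{main} with the weaker input furnished by Definition~\ref{d:Aenl}. That proof has three ingredients: (i) a $K$-homological version of Proposition~\ref{p:central}, producing from the enlargeability data a coarse map $\Phi\colon \widetilde{M}\longrightarrow B^n$ that carries the coarse $K$-homology fundamental class $[\widetilde{M}]_X$ to a non-zero class in the coarse $K$-homology $KX_*(B^n)$; (ii) a computation of the coarse index map $KX_*(B^n)\longrightarrow K_*(C^*(B^n))$ showing that it is non-zero on this class; and (iii) naturality of the coarse index together with the identification under which $\alpha(M)=\mu\circ c_*([M])$ is the equivariant coarse index of $[\widetilde{M}]_X$, so that non-vanishing in $KX_*(B^n)$ propagates to $\alpha(M)\neq 0$.

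For (i) I would imitate Proposition~\ref{p:central}, with two adjustments dictated by the more general hypothesis. First, because the coverings $M_\epsilon$ are now allowed to be arbitrary, I precompose each $\epsilon$-contracting map $f_\epsilon\colon M_\epsilon\longrightarrow (S^n, g_{can})$ with the covering projection $\widetilde{M}\longrightarrow M_\epsilon$, a local isometry and hence a coarse map; the result is an $\epsilon$-contracting map $\widetilde{M}\longrightarrow S^n$ which need no longer be constant outside a compact set, but the balloon $S^n(i)$ into which a point of $\widetilde{M}$ is sent is governed by its distance to a fixed base point, so the assembled $\Phi$ is still metrically proper and bornologous. Second, in place of the nonzero-degree condition of Definition~\ref{d:enl} one has only the non-triviality of $(f_\epsilon)_*$ on the $K$-orientation class $[M_\epsilon]\in K_*(S^n,S)=\widetilde{K}_*(S^n)$. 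A balloon-by-balloon analysis then identifies $\Phi_*[\widetilde{M}]_X$, inside the quotient $\bigl(\prod_i\widetilde{K}_*(S^n)\bigr)/\bigl(\bigoplus_i\widetilde{K}_*(S^n)\bigr)$ that detects $KX_*(B^n)$ in degree $\dim M$, with the sequence whose $i$-th entry equals $k_i\cdot(f_{\epsilon_i})_*[M_{\epsilon_i}]$ for a chosen sequence $\epsilon_i\to 0$, where $k_i\geq 1$ is the number of times the region of $\widetilde{M}$ assigned to the $i$-th balloon covers $M_{\epsilon_i}$. Since every entry of the sequence is non-zero, $\Phi_*[\widetilde{M}]_X\neq 0$. (In particular the hypothesis forces $n\equiv\dim M\pmod 2$, since otherwise $\widetilde{K}_*(S^n)$ vanishes in degree $\dim M$.)

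Ingredients (ii) and (iii) are insensitive to the value of $n$ and are exactly as for \thmref{main}. For (ii), $B^n$ is built from the flasque ray $[0,\infty)$ and from round $n$-spheres, and using only this explicit structure one can verify directly that $KX_*(B^n)\longrightarrow K_*(C^*(B^n))$ is injective — this is the point at which, following the policy of \secref{s:BC}, one does not rely on the coarse Baum--Connes conjecture in general; alternatively it follows from that conjecture, which holds for $B^n$ since $B^n$ has bounded geometry and finite asymptotic dimension. Either way the image of $\Phi_*[\widetilde{M}]_X$ in $K_*(C^*(B^n))$ is non-zero. For (iii), under the canonical isomorphism $K_*\bigl(C^*(\widetilde{M})^{\pi_1(M)}\bigr)\cong K_*\bigl(C^*_{red}\pi_1(M)\bigr)$ the equivariant coarse index of $[\widetilde{M}]_X$ is precisely $\alpha(M)$ (the coarse-geometric model of the analytic assembly map for the free cocompact action), and forgetting equivariance is compatible with taking coarse indices; hence the image of $\alpha(M)$ in $K_*(C^*(\widetilde{M}))$ equals the non-equivariant coarse index of $[\widetilde{M}]_X$, and applying $\Phi_*$ and using naturality of the coarse index this image equals the non-zero class of (ii). Therefore $\alpha(M)\neq 0$, i.e.\ $M$ is Baum--Connes essential.

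The main obstacle is step (i), specifically the assertion that $\Phi$ can be arranged so that each balloon receives a non-zero contribution matching $\bigl((f_{\epsilon_i})_*[M_{\epsilon_i}]\bigr)_i$. The delicacy is twofold: after precomposing with covering projections the maps to $S^n$ are no longer constant outside a compact set, so one must work with coarse rather than locally finite $K$-homology and isolate the contribution of each balloon on its own; and when the covering $\widetilde{M}\longrightarrow M_\epsilon$ is infinite-sheeted one must choose the successive regions of $\widetilde{M}$ carefully enough that they simultaneously exhaust $\widetilde{M}$ coarsely and each map onto the relevant compact part of $M_\epsilon$ at least once. A lesser point is assembling the identifications used in (iii) — the coarse-geometric description of $\mu\circ c_*$ and its compatibility with the forgetful map to the non-equivariant Roe algebra — into a single commuting diagram.
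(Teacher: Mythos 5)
Your strategy cannot work as stated, and the obstruction is not merely technical. Both halves of your plan break. For step (i): after precomposing $f_\epsilon$ with an infinite-sheeted covering projection $\widetilde{M}\to M_\epsilon$, the resulting map to $S^n$ is non-constant on the unbounded set $p^{-1}(K_\epsilon)$, and the cut-and-paste construction of Proposition~\ref{p:central} relies precisely on the non-constancy locus being a compact set that can be translated, by a deck transformation, deep into the annular region assigned to the $i$-th balloon. If you simply restrict $f_{\epsilon_i}\circ p$ to such a region and cut off, the boundary is not sent to the basepoint, so the restriction does not define a class in $K_*(S^n,S)$ at all, and your ``multiplicity $k_i$'' is not a well-defined integer when the cover is infinite; there is no reason the resulting contribution is a non-zero multiple of $(f_{\epsilon_i})_*[M_{\epsilon_i}]$. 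More decisively, step (iii) is false in the generality of \thmref{t:main}: your detection of $\alpha(M)$ factors through the forgetful map to $K_*(C^*(\widetilde{M}))$, i.e.\ through the non-equivariant coarse index of the Dirac class of $\widetilde{M}$. But by the Block--Weinberger examples~\cite{BW}, recalled after Corollary~\ref{c:psc}, there are $\hat{A}$-enlargeable (hence $K$-theory enlargeable, with $n=0$) spin manifolds whose universal covers admit uniformly positive scalar curvature quasi-isometric to a pullback metric; for these the image of $[M]_K$ in $K_*(C^*(\widetilde{M}))$ vanishes by the Lichnerowicz argument, so no pushforward to any balloon space can recover $\alpha(M)$. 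This is exactly why the paper states that Corollary~\ref{c:psc} fails in the setting of \thmref{t:main}: the universal cover alone does not carry the information.

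The paper's proof is structured to avoid both problems. Instead of $\widetilde{M}$ it uses the coarse space $\mathcal{M}=\amalg_i M_{1/i}$, the disjoint union of the chosen coverings themselves, mapped to the balloon space $\mathcal{B}^n$ of spheres with infinite whiskers (the whiskers absorb the region where $f_{1/i}$ is constant, restoring properness without any cut-off). To retain the equivariant information that your forgetful step destroys, it works with Roe algebras with coefficients: the Mishchenko bundles $L_i=\widetilde{M_{1/i}}\times_{\Gamma_i}C^*_{red}\Gamma_i$ are amalgamated into a coefficient algebra $C^*_{red}\Gamma$, $\Gamma=\bigoplus_i\Gamma_i$, and a homomorphism $\psi\colon C^*_\Gamma(\widetilde{\mathcal{M}})\to C^*(\mathcal{M};C^*_{red}\Gamma)$ replaces your plain transfer. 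It then proves the coarse Baum--Connes isomorphism \emph{with coefficients} for $\mathcal{B}^n$ (Proposition~\ref{p:coarseass}, by a Mayer--Vietoris and flasqueness induction --- your appeal to bounded geometry and Yu's theorem would also need the coefficient version), and finally detects the image not by a degree count but by applying the canonical trace and the homological form of Atiyah's $L^2$-index theorem (Propositions~\ref{p:atiyah} and~\ref{p:last}), identifying the image of $[M]_K$ with the sequence $(z_1,z_2,\dots)$ in $(\prod_i\R)/(\bigoplus_i\R)$. The trace/$L^2$-index step is the mechanism that converts the $K$-theoretic degree data living on each $M_{1/i}$ into something visible after assembly over $C^*_{red}\pi_1(M)$; your proposal has no substitute for it.
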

Note that in addition to considering $K$-theory enlargeability, we now also allow arbitrary covering spaces
in the definition of enlargeability, whereas in Theorem~\ref{main} we only used the universal covering.

If the $\Spc$ structure under consideration is induced by a spin structure, then the image of the $K$-theoretic
fundamental class under $f_{\epsilon}$ is given by the $\hat{A}$-genus of a regular fiber, and $K$-theory
enlargeability reduces to $\hat{A}$-enlargeability in the sense of Gromov and Lawson. The conclusion of 
Theorem~\ref{t:main} means that $\alpha(M)$ does not vanish. {\it A fortiori} the real index $\alpha_{\R}(M)$
does not vanish either, so that the obstruction to the existence of a positive scalar curvature metric on $M$
provided by $\hat{A}$-enlargeability is completely subsumed by $\alpha_{\R}(M)$.

Just like Theorem~\ref{main}, we prove Theorem~\ref{t:main} unconditionally, without assuming any unproved 
version of the Baum--Connes conjecture. However, if the Baum--Connes conjecture does hold for $\pi_1(M)$, 
then the special case of this theorem for $\Spc$-structures induced by spin structures can be derived from the 
result of Stolz mentioned above, because $\hat{A}$-enlargeability is preserved by stabilisation with the Bott manifold.

Although Theorem~\ref{main} is a special case of Theorem~\ref{t:main}, we shall first give a proof of this special 
case using the same ideas as in the proof of Theorem~\ref{t:first}. Then, in order to prove Theorem~\ref{t:main} in full 
generality, we will face serious additional complications explained in Section~\ref{s:coarsecoeff} below. The more 
straightforward proof we give for the special case of Theorem~\ref{main} does have another advantage in addition to
its simplicity, which is that it allows us to derive the following:
\begin{cor}\label{c:psc}
If $M$ is universally enlargeable and its universal covering $\widetilde{M}$ is spin, then $\widetilde{M}$ does not admit 
a metric with uniformly positive scalar curvature which is quasi-isometric to a pullback metric from $M$ via the identity.
\end{cor}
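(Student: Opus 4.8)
The plan is to combine the geometric core of the paper—namely Proposition~\ref{p:central}, which produces a coarse map $\widetilde{M} \to B^n$ detecting the coarse fundamental class—with the classical Lichnerowicz--Gromov--Lawson argument, transported to the coarse (uniformly positive scalar curvature) setting. We argue by contradiction: suppose $\widetilde{M}$ carries a metric $h$ of uniformly positive scalar curvature, say $\mathrm{scal}_h \geq \kappa > 0$, which is quasi-isometric to the pullback metric $\widetilde{g}$ coming from a fixed metric $g$ on $M$ via the identity map. Since $M$ is universally enlargeable, Definition~\ref{d:enl} supplies, for every $\epsilon > 0$, an $\epsilon$-contracting degree-nonzero map $f_\epsilon \colon \widetilde{M} \to (S^n, g_{can})$ that is constant outside a compact set, where $n = \dim M$. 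The $\epsilon$-contracting condition is stated with respect to $\widetilde{g}$, but because $h$ is quasi-isometric to $\widetilde{g}$, the maps $f_\epsilon$ are $C\epsilon$-contracting with respect to $h$ for a fixed constant $C$; adjusting $\epsilon$, we may as well assume they are $\epsilon$-contracting for $h$ itself.

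Next I would run the standard twisted-Dirac-operator estimate on $(\widetilde{M}, h)$. Using the spin structure on $\widetilde{M}$, pull back a bundle $E_\epsilon \to S^n$ with nontrivial top Chern class (or, over $\mathbb{Q}$, a nontrivial element of $\widetilde{K}^0(S^n)$) via $f_\epsilon$ to get $V_\epsilon = f_\epsilon^* E_\epsilon$ on $\widetilde{M}$; since $f_\epsilon$ is constant outside a compact set, $V_\epsilon$ is trivial outside a compact set and carries a connection that is $\epsilon$-flat, i.e. with curvature bounded by a constant times $\epsilon$. The Lichnerowicz--Weitzenböck formula for the Dirac operator twisted by $V_\epsilon$ reads $D^*D = \nabla^*\nabla + \tfrac{1}{4}\mathrm{scal}_h + \mathcal{R}^{V_\epsilon}$, and for $\epsilon$ small enough $\tfrac14 \mathrm{scal}_h + \mathcal{R}^{V_\epsilon} \geq \tfrac{\kappa}{8} > 0$ pointwise. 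Because the geometry is bounded (pullback from the closed manifold $M$, up to quasi-isometry) and $V_\epsilon$ is trivial at infinity, this uniform positivity forces the twisted Dirac operator to be invertible, so its index—which lives in the $K$-theory of the relevant $C^*$-algebra, or can be detected by a relative index / coarse index pairing against the fundamental class of $\widetilde{M}$—must vanish. But the same input, via Proposition~\ref{p:central} and the degree/nontriviality of $f_\epsilon$, shows this index is nonzero: the coarse fundamental class $[\widetilde{M}]_X$ is sent to a nonzero class in $HX_*(B^n)$, and the twisted index computes (up to the usual index-theoretic identifications) exactly the pairing that Proposition~\ref{p:central} shows to be nontrivial. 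This contradiction proves the corollary.

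There is an alternative, more elementary route that avoids invoking index theory at all, in keeping with the philosophy of the paper: by the proof of Theorem~\ref{main} (the special case), universal enlargeability of $M$ already shows that $\alpha(\widetilde{M})$, interpreted as a coarse index class in the $K$-theory of the Roe algebra $C^*(\widetilde{M})$, is nonzero; here one uses that the coarse map $\widetilde{M} \to B^n$ detects $[\widetilde{M}]_X$ and that $B^n$ has nonvanishing coarse $K$-homology. On the other hand, the existence of a uniformly positive scalar curvature metric on $\widetilde{M}$—quasi-isometric to the pullback metric, hence defining the same coarse structure and the same Roe algebra—implies via the coarse Lichnerowicz argument that this coarse index vanishes. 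I would phrase the corollary's proof so that it simply quotes the nonvanishing established along the way to Theorem~\ref{main} and then applies the coarse vanishing theorem for uniformly positive scalar curvature.

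The main obstacle I expect is bookkeeping the quasi-isometry carefully: one must check that the uniformly-bounded-geometry and triviality-at-infinity hypotheses needed for the coarse index (or for the mapping-space argument of Proposition~\ref{p:central}) are invariant under replacing $\widetilde{g}$ by a quasi-isometric metric $h$, and that the $\epsilon$-contracting condition survives the change of metric with only a controlled loss in $\epsilon$. A secondary subtlety is that the spin condition is imposed on $\widetilde{M}$ rather than $M$—so the Dirac operator lives only upstairs—which is precisely why the statement is about $\widetilde{M}$ not admitting such a metric, rather than about $M$; one has to make sure the coarse index lives in $K_*(C^*(\widetilde{M}))$ and that the enlargeability data (which is already a statement about $\widetilde{M}$) feeds into it directly, without needing to descend to $M$.
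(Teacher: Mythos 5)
Your second, ``alternative'' route is essentially the paper's own proof. The paper deduces Corollary~\ref{c:psc} by starting the commutative diagram~\eqref{eq:CD} not at $K_*(M)$ but at $K^{lf}_*(\widetilde{M})$, taking the Dirac class of the spin structure on $\widetilde{M}$ (this is the one-line resolution of the subtlety you flag at the end: $M$ itself need not carry a $\Spc$-structure, and no descent to $M$ is needed). Pushing this class through $KX_*(\widetilde{M})\stackrel{\phi_*}{\longrightarrow} KX_*(B^n)\cong K_*(C^*(B^n))$, using Propositions~\ref{p:central} and~\ref{p:coarseBC}, shows that the coarse index of the spin Dirac operator in $K_*(C^*(\widetilde{M}))$ is non-zero; the coarse Lichnerowicz vanishing theorem for complete metrics of uniformly positive scalar curvature, combined with the fact that a quasi-isometric metric induces the same coarse structure, hence the same Roe algebra and the same assembled class (the $K$-homology class of the Dirac operator is metric-independent and assembly depends only on the coarse structure), then gives the contradiction. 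Phrased this way your argument is correct and coincides with the paper's.

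Your first, ``direct Gromov--Lawson'' route has a genuine gap at its opening step: the claim that, since $h$ is quasi-isometric to $\widetilde{g}$, the maps $f_\epsilon$ are $C\epsilon$-contracting for $h$ with a uniform constant $C$. Quasi-isometry is a purely large-scale condition: it gives $\lambda^{-1}d_{\widetilde{g}}-C\le d_h\le\lambda\, d_{\widetilde{g}}+C$ but no pointwise comparison of the Riemannian tensors, so the identity need not be Lipschitz at small scales in either direction. Hence $\|df_\epsilon\|_h$ can be arbitrarily large, the pulled-back bundle $f_\epsilon^*E_\epsilon$ need not be almost flat with respect to $h$, and $h$ need not have bounded geometry, so the Weitzenb\"ock estimate $\tfrac14\mathrm{scal}_h+\mathcal{R}^{V_\epsilon}>0$ is not available and the twisted relative-index argument on $(\widetilde{M},h)$ collapses. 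This is precisely why the corollary is proved via coarse invariance of $K_*(C^*(\widetilde{M}))$ rather than by rerunning the enlargeability estimate for the new metric: the contracting maps are controlled only for $\widetilde{g}$, the curvature hypothesis concerns only $h$, and the only bridge between the two that survives quasi-isometry is the coarse one. (Your first route would be salvageable if ``quasi-isometric'' were strengthened to bi-Lipschitz equivalence of the metric tensors, but that is not the hypothesis.)
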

In the more general situation considered in Theorem~\ref{t:main}, the corresponding statement is not true. In fact,
Block and Weinberger~\cite{BW} give examples of spin manifolds $M$ which are $\hat{A}$-enlargeable (with $n=0$), but 
whose universal coverings do admit positive scalar curvature metrics that are quasi-isometric to pullback metrics from $M$.
Corollary~\ref{c:psc} shows in particular that examples of the kind considered in~\cite{BW} can never be universally
enlargeable.

\subsection*{Acknowledgements}
DK would like to thank S.~Weinberger, who in conversations in 2004 suggested detecting essentialness of enlargeable 
manifolds by mapping to (a coarse analogue of) a one-point union of spheres. BH, DK and TS are members of the DFG Priority 
Program in Global Differential Geometry.

\section{Coarse homology}\label{homology}

In this section we recall a few salient features of coarse homology, which we need for the proof of Theorem~\ref{t:first}. 
Our reference is~\cite{R}, see also~\cite{Roe}.

Let $M$ be a topological space. Its locally finite homology $H^{lf}_*(M;\Q)$ 
is the homology of the chain complex  $(C_*^{lf}(M), \partial)$, where $C_i^{lf}(M)$ 
is the abelian group of infinite rational linear combinations 
\[
     \sum_{\sigma} \alpha_{\sigma} \cdot \sigma
\]
of singular $i$-simplices $\sigma \colon\Delta^i \to M$ 
with the property that each compact set in $M$ is met by 
only finitely many simplices. 

This locally finite homology theory is a functor on the category of topological spaces and proper continuous maps.

Now let $M$ be a proper metric space. The coarse homology $HX_*(M;\Q)$ is
defined as follows. Let $\mathfrak{U}_i$ be a coarsening sequence 
of covers of $M$ in the sense of~\cite[p.~15]{R}. We then set
\[   
    HX_*(M) := \lim_{\ra_i} H^{lf}_*(|\mathfrak{U}_i|) \, , 
\]
where $|\mathfrak{U}_i|$ is the geometric realization of the nerve of $\mathfrak{U}_i$.
Coarse homology is functorial for coarse maps, which in the case of length spaces are 
precisely the proper maps which are large scale Lipschitz, see the definitions in~\cite[p.~9]{R}.

Lastly,  let $M^n$ be a closed oriented manifold. Fix a metric 
and a triangulation of $M$. The sum of all the lifts 
(with induced orientations) of the $n$-simplices in $M$ with respect to 
the projection $\widetilde{M} \longrightarrow M$ defines the locally finite fundamental class 
\[  
    [ \widetilde{M} ]_{lf} \in H^{lf}_n(\widetilde{M}) \, . 
\]
Note that there is a canonical map  
\[
    H^{lf}_*(\widetilde{M}) \longrightarrow HX_*(\widetilde{M}) \, . 
\]
Indeed, let $d$ be the maximal diameter of simplices in  $M$. Then the set 
of all open balls of radius $2d$ around each vertex in $\widetilde{M}$ defines an 
open cover $\mathfrak{U}$ of $\widetilde{M}$ which we can use as a particular 
cover in the coarsening sequence $(\mathfrak{U}_i)$ in the definition of coarse 
homology. It follows from the definition of the geometric realization $|\mathfrak{U}|$
that the simplicial complex $\widetilde{M}$ has a natural simplicial map to
$|\mathfrak{U}|$ and the above map is simply the composition
\[ 
    H^{lf}_*(\widetilde{M}) \longrightarrow H^{lf}_*(|\mathfrak{U}|) \longrightarrow HX_*(\widetilde{M}) \, . 
\]
The coarse fundamental class $[\widetilde{M}]_X$ of $\widetilde{M}$ is defined as the image of $[\widetilde{M}]_{lf}$ under 
this map. 

This construction actually applies not just to universal covers of closed manifolds,
but to all complete manifolds with bounded geometry.
Using it, we now define macroscopic largeness.
\begin{defn} [\cite{Gong}]
A complete oriented Riemannian manifold $N$ with bounded geometry 
is called {\it macroscopically large} if 
\[
    [N]_X \neq 0 \in HX_n(N) \, ,
\]
where $HX_*$ denotes the coarse homology.
\end{defn}

As an example for the calculation of coarse homology, fix a natural number $n$ and 
consider the balloon space 
\[
   B^n = [0,\infty) \bigcup_{\{1,2,3,\ldots\}} \left( \cup_i S^n(i) \right) \ .
\]
\begin{prop}\label{p:B}
The coarse homology of $B^n$ with $\mathbb{Q}$-coefficients in degree $n$ is given  by
\[
   HX_n(B^n) \cong \left( \prod_{i=1}^{\infty} \Q\right) / \left(\bigoplus_{i=1}^{\infty} 
  \Q \right) \, . 
\]
\end{prop}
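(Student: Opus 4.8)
The plan is to unwind the definition $HX_n(B^n)=\varinjlim_k H^{lf}_n(|\mathfrak{U}_k|)$ and read off both the groups $H^{lf}_n(|\mathfrak{U}_k|)$ and the bonding maps from the geometry of the balloon space. Since coarse homology is independent of the chosen coarsening sequence, I would first fix a convenient one: a sequence of uniformly locally finite open covers $\mathfrak{U}_k$ of $B^n$ whose Lebesgue numbers and mesh both grow like $D_k\to\infty$, chosen compatibly with the decomposition of $B^n$ into the ray $[0,\infty)$ and the spheres $S^n(i)$ so that: the restriction of $\mathfrak{U}_k$ to $[0,\infty)$ is the standard string-of-balls cover; the restriction to a sphere $S^n(i)$ is the trivial cover by a single member whenever that sphere is small relative to the mesh (it then fits inside one member); and the restriction to every larger sphere is a \emph{good} cover in the sense of the nerve lemma. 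Let $\phi(k)$ denote the largest index $i$ for which $S^n(i)$ gets the trivial cover at stage $k$, so that $\phi(k)\to\infty$.

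The main structural claim is that $|\mathfrak{U}_k|$ is, up to \emph{proper} homotopy equivalence, a ``telescope of spheres'': the space obtained from $[0,\infty)$ by attaching one copy of $S^n$ at each integer $i>\phi(k)$. Indeed, the portion of $|\mathfrak{U}_k|$ lying over the ray together with the spheres $S^n(i)$, $i\le\phi(k)$, is a locally finite, connected, non-compact complex built from the contractible nerve of the ray cover by attaching a contractible blob (the star of a single vertex) for each collapsed small sphere; it therefore has vanishing locally finite homology in every degree. Each sphere $S^n(i)$ with $i>\phi(k)$ contributes, by the nerve lemma applied to its induced good cover, a sub-nerve homotopy equivalent to $S^n$, attached to the acyclic part along a contractible neighbourhood of the attaching point. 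Keeping track of uniform local finiteness, the partition-of-unity map together with a nerve-lemma homotopy inverse can be arranged to be proper, which gives the claimed proper homotopy type. A standard computation of locally finite homology for such a telescope then yields
\[
   H^{lf}_n(|\mathfrak{U}_k|)\;\cong\;\prod_{i>\phi(k)}\Q,
\]
with the evident basis given by the fundamental classes $[S^n(i)]$ of the large spheres (there are no chains in dimension $n+1$ to create relations, and the ray end contributes nothing).

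It remains to identify the bonding maps and take the colimit. For $k<k'$ the coarsening map $|\mathfrak{U}_k|\to|\mathfrak{U}_{k'}|$ sends the fundamental class of $S^n(i)$ to that of $S^n(i)$ when $i>\phi(k')$, and collapses it into the acyclic part when $\phi(k)<i\le\phi(k')$; hence on $H^{lf}_n$ it is the projection $\prod_{i>\phi(k)}\Q\twoheadrightarrow\prod_{i>\phi(k')}\Q$ that forgets the finitely many coordinates belonging to spheres absorbed between stages $k$ and $k'$. In the colimit of this system, two sequences are identified precisely when they agree after discarding finitely many initial coordinates, i.e. when they are eventually equal, so
\[
   HX_n(B^n)\;\cong\;\varinjlim_k\ \prod_{i>\phi(k)}\Q\;\cong\;\Big(\prod_{i=1}^{\infty}\Q\Big)\Big/\Big(\bigoplus_{i=1}^{\infty}\Q\Big),
\]
which is the assertion of the proposition.

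I expect the genuine difficulty to lie entirely in the structural claim of the second paragraph: verifying that at each finite scale the small spheres are absorbed into an $H^{lf}$-acyclic subcomplex, that the large spheres each contribute exactly one $S^n$ and nothing more, and that no spurious locally finite homology appears where spheres meet the ray. The way to control this is the compatible choice of covers made in the first paragraph together with a \emph{proper} version of the nerve lemma for uniformly locally finite good covers; an alternative is an excision/Mayer--Vietoris argument in locally finite homology, decomposing $|\mathfrak{U}_k|$ along the cut points of the ray. Everything else --- the acyclicity of a one-ended ray, the locally finite homology of a telescope of $n$-spheres, and the description of the colimit as $\prod\Q/\bigoplus\Q$ --- is routine.
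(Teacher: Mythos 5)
Your argument is essentially the paper's own: the paper also chooses a coarsening sequence whose nerves are (up to proper homotopy) the spaces obtained from $B^n$ by collapsing the first few spheres onto the ray, identifies $H^{lf}_n$ of each stage with a product of copies of $\Q$ indexed by the surviving spheres, and observes that the bonding maps kill finitely many coordinates, so the colimit is $\bigl(\prod_i\Q\bigr)/\bigl(\bigoplus_i\Q\bigr)$. Your version just makes the choice of covers and the nerve-lemma/properness bookkeeping more explicit; the substance is the same.
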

\begin{proof} 
For $1 \leq l < \infty$, let 
\[
     B^n_{\geq l} \subset B^n
\]
be the subspace defined by removing the first $l$ spheres in 
$B^n$. We obtain a directed system 
\[
    B^n = B^n_{\geq 1} \to B^n_{\geq 2} \to 
                   B^n_{\geq 3} \to \ldots
\]
where each map 
\[
   B^n_{\geq l} \to B^n_{\geq (l+1)} 
\]
collapses the $l^{\textrm{th}}$ sphere onto the point $l \in [0,\infty)$. Then (by use 
of an appropriate coarsening sequence for $B^n$), 
$HX_n(B^n)$ can be calculated as 
\[
  \lim_{\longrightarrow_l} H^{lf}_n (B^n_{\geq l})  \cong   \lim_{\longrightarrow_l} 
              \left(  \prod_{i=1}^{\infty} \Q \right) / \left( 
               \bigoplus_{i=1}^l \Q \right) = \left( \prod_{i=1}^{\infty} \Q\right) / \left(\bigoplus_{i=1}^{\infty} 
  \Q \right) \, . 
\]
\end{proof}

\section{Proof of Theorem~\ref{t:first}} \label{proof1}

The first implication in Theorem~\ref{t:first} is a consequence of the following:
\begin{prop}\label{p:central}
Let $M$ be a closed oriented $n$-dimensional manifold which is universally enlargeable.
Then there is a coarse map 
\[
  \phi \colon \widetilde{M} \longrightarrow B^n
\]
such that for each $i \in \mathbb{N}$ the composition 
\[
    \widetilde{M}  \stackrel{\phi}{\longrightarrow} B^n \longrightarrow S^n(i)
\]
has degree $d_i\neq 0$. Furthermore, 
\[
   \phi_*([\widetilde{M}]_X) = (d_1, d_2, \ldots) \in \left( \prod_{i=1}^{\infty} \Q \right) / \left( \bigoplus_{i=1}^{\infty} \Q\right) \cong HX_n(B^n) \, . 
\]
In particular, $\widetilde M$ is macroscopically large.
\end{prop}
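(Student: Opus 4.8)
The plan is to feed the enlargeability of $M$ into the construction of a single coarse map $\phi\colon\widetilde M\to B^n$, obtained by splicing together suitably rescaled contracting maps, one onto each balloon $S^n(i)$, whose supports have first been pushed off to infinity by deck transformations. For the rescaling: for each $i\in\N$ apply Definition~\ref{d:enl} with $\epsilon=1/i$ to get a $\tfrac1i$-contracting map $f_i\colon\widetilde M\to(S^n,g_{can})$ of nonzero degree $d_i$ that is constant outside a compact set, and after post-composing with a rotation (an orientation-preserving isometry) we may assume this constant value is the south pole. The canonical scaling diffeomorphism $(S^n,g_{can})\to S^n(i)$ multiplies all distances by $i$, so the composite $g_i\colon\widetilde M\to S^n(i)$ is distance-non-increasing, has degree $d_i\neq 0$, and is constant equal to the south pole $S_i$ of $S^n(i)$ — that is, to the attaching point $i\in B^n$ — outside a compact set $K_i^0\subset\widetilde M$. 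Note also that $\pi_1(M)$ must be infinite: were $\widetilde M$ closed, any $\epsilon$-contracting map of nonzero degree to $S^n$ would be surjective with image of diameter at most $\epsilon\cdot\operatorname{diam}(\widetilde M)<\pi$ for $\epsilon$ small, which is absurd; hence $\pi_1(M)$ acts cocompactly on the complete manifold $\widetilde M$ by orientation-preserving isometries.

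\emph{Construction of $\phi$.} Fix $x_0\in\widetilde M$. Using cocompactness, choose deck transformations $\gamma_i$ recursively and let $\tilde K_i$ be the closed $i$-neighbourhood of $\gamma_i(K_i^0)$, arranging that the $\tilde K_i$ are pairwise disjoint with $\operatorname{dist}(\tilde K_i,\tilde K_j)\geq i+j$ for $i\neq j$, and that the ranges $[a_i,b_i]$ of $\operatorname{dist}(x_0,\cdot)$ on $\tilde K_i$ are disjoint and increasing with $a_{i+1}>b_i+1$. Put $\tilde g_i:=g_i\circ\gamma_i^{-1}$, which is still distance-non-increasing, of degree $d_i$, and constant equal to $S_i$ outside $\gamma_i(K_i^0)$, hence on a neighbourhood of $\partial\tilde K_i$, since $\partial\tilde K_i$ lies at distance $i$ from $\gamma_i(K_i^0)$. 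Pick a $1$-Lipschitz non-decreasing $\psi\colon[0,\infty)\to[0,\infty)$ with $\psi\equiv i$ on $[a_i,b_i]$ and set $\rho:=\psi(\operatorname{dist}(x_0,\cdot))$, a $1$-Lipschitz function identically equal to $i$ on $\tilde K_i$. Now let $\phi$ equal $\tilde g_i$ (with values in $S^n(i)\subset B^n$) on $\tilde K_i$ and equal $\rho$ (with values in the ray $[0,\infty)\subset B^n$) on the complement of all the $\operatorname{int}\tilde K_i$; the two recipes agree along $\partial\tilde K_i$, where both equal the point $i$, and $\tilde g_i$ is locally constant there, so $\phi$ is continuous. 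It remains to see that $\phi$ is a coarse map. It is proper, because the preimage of a bounded subset of $B^n$ meets only finitely many $\tilde K_i$ and lies in a sublevel set $\{\rho\leq N\}$, both of which are bounded, hence relatively compact, in $\widetilde M$. And it is large-scale Lipschitz by a short case analysis: $\phi$ is non-expanding on each $\tilde K_i$ and on the complement of all of them (using that $S^n(i)\hookrightarrow B^n$ and $[0,\infty)\hookrightarrow B^n$ are isometric for the path metric); for $x\in\tilde K_i$, $y\in\tilde K_j$ with $i\neq j$ one has $\operatorname{dist}(x,y)\geq i+j$ while $d_{B^n}(\phi(x),\phi(y))\leq\pi i+|i-j|+\pi j\leq(\pi+1)(i+j)$; and the mixed case is handled the same way, using that $\operatorname{dist}(x,\partial\tilde K_i)\geq i$ whenever $\tilde g_i(x)\neq S_i$. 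So $\phi$ is $(\pi+1)$-Lipschitz and proper, hence coarse.

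\emph{Identification of the fundamental class.} For each $i$ let $p_i\colon B^n\to S^n(i)$ collapse the complement of $S^n(i)$ to $S_i$; by construction $p_i\circ\phi=\tilde g_i$, so the composition $\widetilde M\xrightarrow{\phi}B^n\xrightarrow{p_i}S^n(i)$ has degree $d_i\neq 0$. Since $\phi$ is a proper continuous coarse map, the square relating $\phi_*$ on locally finite and on coarse homology to the canonical maps $H^{lf}_n(-)\to HX_n(-)$ of Section~\ref{homology} commutes, so it suffices to compute $\phi_*([\widetilde M]_{lf})\in H^{lf}_n(B^n)$ and take its image in $HX_n(B^n)\cong(\prod_i\Q)/(\bigoplus_i\Q)$ from Proposition~\ref{p:B}. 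Under the identification $H^{lf}_n(B^n)\cong\prod_i\Q$ the $i$-th coordinate is read off by composing with the \emph{proper} map $B^n\to B^n$ that collapses every sphere $S^n(j)$, $j\neq i$, onto the point $j$ of the ray; on $\tilde K_i$ the resulting map is still $\tilde g_i$, of (relative) degree $d_i$, and everywhere else it takes values in the ray, which carries no locally finite homology in degree $n$, so the $i$-th coordinate equals $d_i$. Hence $\phi_*([\widetilde M]_{lf})=(d_1,d_2,\ldots)$ and therefore $\phi_*([\widetilde M]_X)=(d_1,d_2,\ldots)\in HX_n(B^n)$; since all $d_i$ are nonzero this class is nonzero in $(\prod_i\Q)/(\bigoplus_i\Q)$, so $[\widetilde M]_X\neq 0$ and $\widetilde M$ is macroscopically large.

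\emph{Main obstacle.} The substance is the construction of $\phi$ and the verification that it is a genuine coarse map. The two points that make this go through are the rescaling — the choice $\epsilon_i\sim 1/i$, so that the radius-$i$ sphere can still receive a non-expanding map of the required degree — and the spreading out of the translated supports $\tilde K_i$, whose mutual distances must be made to grow faster than the diameters $\pi i$ of the balloons so that $\phi$ expands distances at most linearly. The degree bookkeeping and the passage to coarse homology in the last step are routine once one has Section~\ref{homology} and Proposition~\ref{p:B} in hand; the only mildly delicate point there is that the natural collapse $B^n\to S^n(i)$ is not proper, which is circumvented by collapsing the other spheres onto the ray instead.
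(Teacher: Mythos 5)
Your proof is correct and follows essentially the same route as the paper: rescale the $1/i$-contracting maps to $1$-contracting maps onto $S^n(i)$, use deck transformations to push the supports far apart, send the complement to the ray $[0,\infty)$ to get a proper, large-scale Lipschitz (hence coarse) map, and read off $\phi_*([\widetilde M]_X)=(d_1,d_2,\ldots)$ via Proposition~\ref{p:B}. The only differences are bookkeeping (the paper organizes the supports along a nested exhaustion by balls with unit collars, while you use separation $\geq i+j$ and a radial function $\psi\circ\dist(x_0,\cdot)$), and you make explicit some points the paper leaves implicit, such as the infinitude of $\pi_1(M)$ and the proper collapse used to extract each coordinate.
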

\begin{proof} 
Pick a Riemannian metric $g$ on $M$. We construct a cover of $\widetilde M$ by a sequence of compact 
balls
\[
      B_1 \subset B_2 \subset B_3 \subset \ldots \subset \widetilde M
\]
as well as a sequence of $1$-contracting maps 
\[
     f_{i} : \widetilde M \to S^n(i) \subset B^n \, , i = 1,2,3, \ldots 
\]
as follows. Set $B_0 := \emptyset$ and assume that $B_i$ has been
constructed. Because $M$ is enlargeable, there is a $1$-contracting map 
\[
   f_{i+1} \colon (\widetilde M , \widetilde g) \to S^n(i+1)
\]
which is constant (mapping to the basepoint in $S^n$) outside a compact subset 
$K_{i+1} \subset \widetilde M$ and of non-zero degree $d_{i+1}$. By precomposing 
$f_{i+1}$ with a deck transformation of $\widetilde M$ if necessary, we can assume that 
\[
    \dist (B_i, K_{i+1}) \geq 1 \, . 
\]
For $i=0$, this condition is empty. Now for $B_{i+1}$ we choose a closed ball containing 
$K_{i+1} \cup B_i$ and such that $\dist(\widetilde M \setminus B_{i+1} , B_i ) \geq 1$.
Then we define
\[
   \phi(x) = \left\{ \begin{array}{l} f_{i+1}(x), {\rm~if~} x \in K_{i+1} \, ,  \\
                              i + \dist(x, B_i), {\rm~if~} 0 < \dist(x, B_i) \leq 1   \, , \\ 
                           i+1, {\rm~if~} x \in B_{i+1} \setminus  K_{i+1}
                                {\rm~and~} \dist(x, B_i) \geq 1 \, . 
                    \end{array} \right. 
\]
The map $\phi$ is proper by definition, and it is large scale Lipschitz also by definition and because 
each $f_i$ is $1$-contracting. Thus $\phi$ is indeed a coarse map.
 
The claim about the image of the fundamental class follows from the
calculation of $HX_n(B^n)$ in Proposition~\ref{p:B}.
\end{proof}
This proof makes clear why we use the space $B^n$, rather than the one-point union of the spheres--
using the latter would not give us a proper map. 

The following is the contraposition of the second part of Theorem~\ref{t:first}.
\begin{prop}
If $M$ is not essential, then $\widetilde M$ is not macroscopically large.
\end{prop}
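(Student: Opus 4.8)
The plan is to trace the coarse fundamental class through the canonical maps and use naturality of the locally finite / coarse homology to reduce everything to a statement about the classifying map $c\colon M \to B\pi_1(M)$. If $M$ is not essential, then $c_*[M] = 0 \in H_n(B\pi;\Q)$. I would like to lift this vanishing to the universal cover.

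First I would recall the comparison between the locally finite homology of $\widetilde M$ and the coarse homology of $\widetilde M$: the coarse fundamental class $[\widetilde M]_X$ is by construction the image of the locally finite fundamental class $[\widetilde M]_{lf} \in H^{lf}_n(\widetilde M)$ under the canonical map $H^{lf}_*(\widetilde M) \to HX_*(\widetilde M)$ (equivalently $H^{lf}_*(\widetilde M) \to H^{lf}_*(|\mathfrak U|) \to HX_*(\widetilde M)$ for the ball cover $\mathfrak U$ described in Section~\ref{homology}). So it suffices to show that $[\widetilde M]_{lf}$ maps to $0$ already in $H^{lf}_*(|\mathfrak U|)$, or more conveniently, to exhibit a proper map $\widetilde M \to X$ to a suitable space, or a factorization, under which the relevant class dies.

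The key step is the observation that $\mathfrak U$ (the cover of $\widetilde M$ by $2d$-balls around the vertices of a lifted triangulation) is $\pi_1(M)$-equivariant, and its nerve $|\mathfrak U|$ is, up to equivariant homotopy, a free contractible $\pi$-complex — more precisely, the quotient $|\mathfrak U|/\pi$ is a finite complex homotopy equivalent to $M$ and $|\mathfrak U|$ itself carries a free proper $\pi$-action, so it admits a proper $\pi$-map to the universal cover $E\pi$ of a classifying space $B\pi$. Passing to quotients, the composite $\widetilde M \to |\mathfrak U| \to E\pi$ covers $M \to |\mathfrak U|/\pi \simeq M \xrightarrow{c} B\pi$. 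The induced square in locally finite homology commutes (all maps are proper), so the image of $[\widetilde M]_{lf}$ in $H^{lf}_n(E\pi)$ is the pullback along the covering $E\pi \to B\pi$ of $c_*[M] \in H_n(B\pi;\Q)$. Since $c_*[M] = 0$, its transfer/pullback vanishes, hence the image of $[\widetilde M]_{lf}$ in $H^{lf}_n(E\pi)$ is zero, and therefore its image in $HX_n(\widetilde M)$ — which factors through $HX_n(E\pi)$, or at least through $H^{lf}_n(|\mathfrak U|)$ which maps to $H^{lf}_n(E\pi)$ — is zero as well. The slickest formulation: $HX_*(\widetilde M)$ depends only on the coarse type of $\widetilde M$, which is that of the $0$-skeleton of $E\pi$ with the word metric, so $HX_*(\widetilde M) \cong HX_*$ of $E\pi$, and under this identification $[\widetilde M]_X$ is the image of $c_*[M]$ under $H_n(B\pi;\Q) = H^{lf}_n(B\pi;\Q) \to H^{lf}_n(E\pi;\Q) \to HX_n(E\pi;\Q)$.

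The main obstacle I expect is making the identification $HX_*(\widetilde M) \cong H^{lf}_*$-of-$E\pi$-through-coarsening precise enough that the fundamental class is tracked correctly: one must check that the map $H^{lf}_n(|\mathfrak U|) \to HX_n(\widetilde M)$ factors through $H^{lf}_n(E\pi)$ in a way compatible with the covering projection, i.e. that the coarse class really is the image of $c_*[M]$ and not merely \emph{some} class lifting it. Concretely, I would argue that $|\mathfrak U|$ maps properly and $\pi$-equivariantly to a model of $E\pi$ with bounded fibers, so the induced map on coarse homology is an isomorphism, and then invoke naturality of the whole diagram $H^{lf}_n(\widetilde M) \to H^{lf}_n(|\mathfrak U|) \to H^{lf}_n(E\pi)$ together with the elementary fact that the locally finite fundamental class of a regular $\pi$-cover maps to the pullback of the ordinary fundamental class of the base — a transfer argument with rational coefficients. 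Once that bookkeeping is in place, the vanishing $c_*[M] = 0$ immediately yields $[\widetilde M]_X = 0$.
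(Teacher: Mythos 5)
Your starting point is the same as the paper's --- transfer the vanishing of $c_*[M]$ upstairs by summing lifts of a bounding chain --- but the way you pass from locally finite homology of $E\pi$ back to the coarse homology of $\widetilde M$ has a genuine gap. The vanishing you produce lives in $H^{lf}_n(E\pi)$ (or at best in $HX_n$ of some metrized model of $E\pi$), and to conclude $[\widetilde M]_X=0$ you then need the map $HX_n(\widetilde M)\to HX_n(E\pi)$ induced by $\widetilde c$ to be injective. Your justification --- that $\widetilde M$ and $E\pi$ have the same coarse type, or that a proper $\pi$-map with bounded fibers induces an isomorphism on coarse homology --- fails in general: $B\pi$ need not admit a finite model, and for an infinite model the $\pi$-action on $E\pi$ is not cocompact, so $E\pi$ with any natural metric is not coarsely equivalent to $\widetilde M$ (the \v{S}varc--Milnor lemma identifies the coarse type of $\widetilde M$ with that of $\pi$, not of $E\pi$); ``bounded fibers'' gives a coarse map but not coarse surjectivity, hence no isomorphism. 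The identity $H_n(B\pi;\Q)=H^{lf}_n(B\pi;\Q)$ you invoke is also false for noncompact $B\pi$. Finally, there is a subtler point your write-up never addresses: the vanishing has to be produced in \emph{coarse} homology, i.e.\ the lifted bounding chain must consist of simplices of uniformly bounded diameter so that it defines a chain at a fixed stage $|\mathfrak{U}|$ of the coarsening sequence; killing the class merely in $H^{lf}_n$ of a topological space is not enough, since $H^{lf}_*\to HX_*$ is not injective.

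Both problems are solved simultaneously by the step your proposal is missing, which is exactly what the paper does: since a bounding chain for $c_\#(C_M)$ is a finite chain, one can choose a finite subcomplex $S\subset B\pi$ with $c(M)\subset S$, with $\pi_1(S)\to\pi$ an isomorphism, and with $c_*[M]=0\in H_n(S;\Q)$. Then $\widetilde M\hookrightarrow\widetilde S$ \emph{is} a coarse equivalence (both are cocompact $\pi$-spaces), and the sum of all lifts of the bounding chain is a locally finite chain consisting of finitely many $\pi$-orbits of simplices, hence of uniformly bounded size; it therefore bounds $C_{\widetilde M}$ already in $H^{lf}_n(|\mathfrak{U}|)$ for the cover by $2d$-balls, so the class dies in $HX_n(\widetilde S)$, and coarse invariance gives $[\widetilde M]_X=0$ in $HX_n(\widetilde M)$. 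Without this finiteness/cocompactness step your argument does not close.
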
 
\begin{proof}
Assume that $M$ is not essential. Then there is a finite subcomplex $S \subset B \pi_1(M)$ such that $c(M) \subset S$, 
the inclusion $S \subset B \pi_1(M)$ induces an isomorphism on fundamental groups, and $c_*([M]) = 0 \in H_n(S; \Q)$. 
Note that we use ordinary homology. Hence, if a chain in $B \pi_1(M)$ is a boundary, it is also 
a boundary in some finite subcomplex of $B \pi_1(M) $. 

We may assume without loss of generality that $M$ is a finite subcomplex of $S$ and 
$c\colon M \longrightarrow S$ is the inclusion. We choose a metric on  $S$. The induced inclusion 
\[   
   \widetilde{M} \stackrel{\widetilde{c}}{\longrightarrow} \widetilde{S}
\]
is then a coarse equivalence. 

We claim that the induced map 
\[
   HX_*(\widetilde{M})   \stackrel{\widetilde{c}_*}{\longrightarrow} HX_*(\widetilde{S})  
\]
sends $[\widetilde{M}]_X$ to zero. This is true for the following reason. Let the simplicial 
chain $C_M \in C_n(M;\Q)$ represent the fundamental class of $M$, and let $b\in C_{n+1}(S;\Q)$ 
be a simplicial chain with $\partial b =  C_M$. This exists by the choice of $S$.
As above we find an open cover $\mathfrak{U}$ of 
$\widetilde{S}$ by open balls with radius $2d$ (where $d$ is the maximal diameter of the 
simplices in $S$) so that $\widetilde{S}$ is a subcomplex of $|\mathfrak{U}|$. In particular, 
the chain $b$ induces a chain $\widetilde{b} \in C_{n+1}(|\mathfrak{U}|)$ (by lifting $b$ to 
$\widetilde{S} \subset |\mathfrak{U}|$) and the boundary of $\widetilde{b}$ is equal to 
$C_{\widetilde{M}}$, where $C_{\widetilde{M}}$ is the lift of $C_M$. Hence, we can indeed 
conclude that the homology class of $C_{\widetilde{M}}$ vanishes in $HX_n(\widetilde{S})$.  

The inclusion $\widetilde{c}$ is a coarse equivalence and so $[\widetilde{M}]_X \in HX_n(\widetilde{M})$ 
vanishes. Therefore $\widetilde{M}$ is not macroscopically large.
\end{proof}

\section{Proof of Theorem~\ref{main}}\label{s:BC}

In this section we use the ideas from the proof of Theorem~\ref{t:first} to prove Theorem~\ref{main} and
Corollary~\ref{c:psc}, using K-homology instead of ordinary homology.

The coarse Baum-Connes conjecture, see~\cite[Conjecture 8.2.]{R}, predicts that for a metric space $M$ 
of bounded geometry, the coarse assembly map 
\[
     \mu_{\infty} \colon KX_*(M) \longrightarrow K_*(C^*(M)) \ ,
\]
see~\cite[Chapter 8]{R}, is an isomorphism. Here, $KX_*(M)$ denotes coarse homology based on locally 
finite complex $K$-homology and $C^*(M)$ is the $C^*$-algebra of locally compact finite propagation operators 
on $M$, see~\cite[Definition 3.4.]{R}. 

At this point it is useful to remark that locally finite $K$-homology can be defined in two ways. On the one hand, the 
operator-theoretic description of $K$-homology, which is used in~\cite{R} for defining the assembly map $\mu_{\infty}$, 
leads directly to a locally finite theory. On the other hand, the paper~\cite{KKS} associates to any homology theory
$E$ a Steenrod homology theory $E_*^{st}$ defined on compact metric 
pairs. One then defines the locally finite $E$-homology $E_*^{lf}(M)$ of a locally compact metric space $M$ as the 
Steenrod $E$-homology of the compact pair $(M \cup \{ \infty\}, \{\infty\})$, where $M\cup \{\infty\}$ is the one-point 
compactification of $M$. If $M$ is a countable and locally finite CW complex and $E$ is ordinary homology, then this 
definition coincides with our previous definition from Section~\ref{homology}, see~\cite[Theorem 1]{Milnor}. In the 
case of $K$-homology, the two descriptions of the locally finite theory coincide by~\cite{KS}.

\begin{prop} \label{p:coarseBC} 
The balloon spaces $B^n$ satisfy the coarse Baum-Connes conjecture.
\end{prop}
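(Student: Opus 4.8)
The plan is to deduce the proposition from Yu's theorem that a metric space of bounded geometry which coarsely embeds into Hilbert space satisfies the coarse Baum--Connes conjecture, together with the fact that finite asymptotic dimension implies such an embedding (indeed property A). So two things have to be verified for $B^n$: bounded geometry, and finiteness of the asymptotic dimension.

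Bounded geometry is routine. Although $B^n$ is not a manifold, it carries a uniformly discrete $1$-net, and the number of net points in a metric ball of radius $r$ is bounded in terms of $r$ and $n$ only: inside a round $n$-sphere of radius $i$ every $r$-ball has volume at most that of the Euclidean $r$-ball (the sectional curvature is non-negative), a ball meeting the ray meets at most $2r+1$ of the spheres, and the contribution of the ray itself is linear in $r$. For the asymptotic dimension I would use the standard permanence properties. The decomposition $B^n = [0,\infty) \cup \bigsqcup_i S^n(i)$ is coarsely excisive, with intersection the discrete set of south poles, so the union theorem gives $\mathrm{asdim}(B^n) \le \max\{1,\ \mathrm{asdim}\big(\bigsqcup_i S^n(i)\big)\}$. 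Since the attaching points are at pairwise distance $|i-j|\to\infty$, the family $\{S^n(i)\}$ is coarsely disjoint, so it is enough to cover each $S^n(i)$, uniformly in $i$, by $n+1$ families of $R$-disjoint sets of diameter controlled by $R$ alone; for the finitely many small $i$ the spheres are bounded, and for large $i$ one pulls back the usual ``brick'' cover of $\R^n$ under charts with multiplicative distortion close to $1$, using that $S^n(i)$ has uniformly bounded local geometry. Hence $\mathrm{asdim}(B^n) \le \max\{1,n\} < \infty$ (for $n=0$ one observes directly that $B^0$ is a tree), and with bounded geometry this finishes the proof.

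The step I expect to require the most care is the uniformity claim above: that the coarse disjoint union $\bigsqcup_i S^n(i)$ has finite asymptotic dimension with a bound independent of the radius $i$. This is exactly where the growth of the spheres must be controlled; granting it, the remainder is a routine appeal to known permanence results and to Yu's theorem. If one prefers to avoid property A, an alternative is a coarse Mayer--Vietoris argument: decompose $B^n = A \cup C$, where $A$ is the ray together with the enlarged closed southern hemispheres of the $S^n(i)$ — which is flasque, hence has vanishing $KX_*$ and $K_*(C^*(-))$ — and $C = \bigsqcup_i (\text{northern caps})$, with $A \cap C = \bigsqcup_i (\text{equatorial bands})$; running the coarse Mayer--Vietoris sequences of~\cite{R} for $KX_*$ and for $K_*(C^*(-))$ in parallel against the assembly map then reduces the proposition to the coarse Baum--Connes conjecture for the coarse disjoint unions $C$ and $A\cap C$. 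These are again ``growing'' pieces that are weakly uniformly contractible, so this route ultimately confronts the same uniformity issue, which is why I would take the finite-asymptotic-dimension argument as the primary one.
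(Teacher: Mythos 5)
Your overall strategy (bounded geometry plus coarse embeddability into Hilbert space, then Yu's theorem \cite{Yu}) is sound, and it is in fact the quick justification the paper itself indicates for this proposition; however, there is a genuine gap in your asymptotic-dimension step. Inside $B^n$ the family $\{S^n(i)\}$ is \emph{not} coarsely disjoint: the south poles of the $i$-th and $(i+1)$-st spheres are at distance $1$ for every $i$, so the family is not $r$-disjoint for any $r>1$, and ``the attaching points are at pairwise distance $|i-j|\to\infty$'' does not give what you need. Moreover, a uniform bound on the asymptotic dimension of the pieces alone does not bound the asymptotic dimension of a union: the countably many lines through the origin in $\R^2$ with rational slopes each have asymptotic dimension $1$, uniformly, yet their union is coarsely dense in the plane and hence has asymptotic dimension $2$. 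So ``it is enough to cover each $S^n(i)$ uniformly in $i$'' does not follow as stated. The repair is standard: invoke the Bell--Dranishnikov infinite union theorem. For each $r$ let $Y_r$ be the union of the $r$-balls around the south poles $p_i$; the sets $S^n(i)\setminus Y_r$ are then $r$-disjoint, because two distinct spheres communicate only through their south poles, and $Y_r$ lies within Hausdorff distance $r$ of the attaching set $\{1,2,3,\dots\}\subset[0,\infty)$, hence has asymptotic dimension at most $1$. For the uniformity over $i$ that you rightly flag, you can also bypass charts entirely: $S^n(i)$ sits isometrically in $\R^{n+1}$ for the chordal metric, which is uniformly bi-Lipschitz to the path metric, so restricting one brick decomposition of $\R^{n+1}$ gives asymptotic dimension at most $n+1$ uniformly in $i$ --- and finiteness is all that the appeal to Yu (via property A) requires.

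As for the comparison with the paper: the citation of \cite{Yu} is offered there only as a one-line justification, and the paper's actual self-contained argument is different, avoiding asymptotic dimension and property A altogether. It is the Mayer--Vietoris computation carried out for the disjoint-union balloon spaces $\mathcal{B}^n$ in Proposition~\ref{p:coarseass}, which the paper says adapts easily to $B^n$: the spheres are cut into \emph{left} and \emph{right} hemispheres, so that the south pole (where the whisker, respectively the ray, is attached) lies on the boundary of each piece; both halves are flasque, hence have vanishing coarse $K$-homology and vanishing $K$-theory of the coarse $C^*$-algebra, the intersection is the balloon space of one dimension lower, and one inducts on $n$ down to an explicit computation at $n=0$. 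Your alternative Mayer--Vietoris sketch runs into a uniformity issue only because you cut north/south, leaving a coarse union of growing caps; with the left/right cut the two large pieces are flasque and the induction closes, so that route does not actually confront the problem you anticipate.
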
 
This can be seen by referring to a deep result of Yu~\cite{Yu} saying that the coarse Baum--Connes conjecture is true 
for metric spaces of bounded geometry which admit uniform embeddings into seperable complex Hilbert spaces. 
However, the spaces $B^n$ are simple enough to check the coarse Baum-Connes conjecture by a direct calculation. 
This will be performed later in this paper for slightly different spaces $\mathcal{B}^n$, see 
Proposition~\ref{p:coarseass}, and can be adapted easily to the $B^n$.

We now start the proof of Theorem~\ref{main}. Let $M$ be a closed universally enlargeable $\Spc$-manifold. We
set $\pi = \pi_1(M)$.   

The composition 
\[
     K_*(M) \stackrel{c_*}{\longrightarrow} K_*(B \pi) 
     \stackrel{\mu}{\longrightarrow}  K_*(C^*_{red} \pi) \, . 
\]
can alternatively be regarded as the composition 
\[
     K_*(M) \stackrel{PD}{=}  K_{* +1} (D_{\pi}^*(\widetilde{M}) / C_{\pi}^*(\widetilde{M})) 
     \stackrel{\partial}{\longrightarrow} K_*( C_{\pi}^*(\widetilde{M})) \iso K_*(C^*_{red} \pi) \, . 
\]
Here we work with the operator-theoretic description of $K$-theory and the assembly map from~\cite[Chapter 5]{R}
using a Paschke duality map $PD$ and a connecting homomorphism $\partial$. Recall from that reference that    
$D^*(\widetilde{M})$ is the $C^*$-algebra generated by all pseudolocal finite propagation operators 
on $\widetilde{M}$ and the subscript $\pi$ indicates that only $\pi$-invariant 
pseudolocal (or locally compact) finite propagation operators are contained in the  generating set.  

It is therefore enough to show that the composition 
\[
    K_*(M) \longrightarrow K_*(C_{\pi}^*(\widetilde{M})) \longrightarrow K_*(C^*(\widetilde{M}))
\]
sends the $K$-theoretic fundamental class of $M$ to a non-zero class. This composition can be factored through the transfer map 
\[
 tr \colon  K_*(M) = K_{*+1}(D_{\pi}^*(\widetilde{M})/C_{\pi}^*(\widetilde{M})) \longrightarrow 
 K_{*+1}(D^*(\widetilde{M})/C^*(\widetilde{M})) = K^{lf}_*(\widetilde{M}) \, . 
\]
This map simply forgets the $\pi$-action. 
We obtain a commutative diagram
\begin{equation}\label{eq:CD}
   \begin{CD} 
     K_*(M) @>>>       K_*(C^*(\widetilde{M}))  \\
      @Vtr VV                           @V=VV           \\
 K_*^{lf}(\widetilde{M}) @>>>  K_*(C^*(\widetilde{M})) \\
      @VVV                              @V=VV             \\
      KX_*(\widetilde{M})        @>\mu_{\infty} >> K_*(C^*(\widetilde{M}))\\
       @VV{\phi_*}V     @V{\phi_*}VV  \\
       KX_*(B^n) @>{\mu_\infty}>{\cong}> K_*(C^* (B^n))\, ,
\end{CD}
\end{equation}
where $\phi_*$ is induced by the coarse map $\phi\colon\widetilde M \longrightarrow B^n$ defined in Section~\ref{proof1}.

Because $\mu_\infty$ is 
an isomorphism for $B^n$ by Proposition~\ref{p:coarseBC}, it is enough to show that the 
image of the $K$-theoretic fundamental class $[M]_K$ of $M$ in $KX_*(B^n)$ is non-zero. 
This can be done exactly as in the proof of Theorem~\ref{t:first} (1), using $K$-homology instead of ordinary homology. 
Namely, one establishes that $KX_n(B^n)\iso (\prod \Z)/(\bigoplus\Z)$, and that again $[M]_K$ is mapped
to the sequence represented by the degrees of the $f_{1/i}$, which is by assumption non-zero in the quotient group.

This completes the proof of Theorem~\ref{main}. To prove Corollary~\ref{c:psc}, notice that if the universal covering
$\widetilde{M}$ is spin, then we can start the argument with the $K$-theoretic orientation class of the spin structure in the 
group $K_*^{lf}(\widetilde{M})$ in~\eqref{eq:CD}, rather than starting at $K_*(M)$ and applying the transfer. Then the usual Lichnerowicz 
argument and the coarse invariance of $K_*(C^*(\widetilde M))$ imply the conclusion.

\section{Coarse $C^*$-algebras with coefficients}\label{s:coarsecoeff}

\subsection{Outline of the proof of Theorem~\ref{t:main}} \label{s:outline} 

The proof of Theorem~\ref{t:main} is based on ideas similar to those in our proof of Theorems~\ref{t:first} and~\ref{main}. 
However, the discussion of manifolds that are enlargeable with respect to arbitrary coverings requires a substantial 
refinement of these methods because we cannot work with the universal covering only and have to use many different 
coverings at once. We do this by introducing the coarse space 
\[
   \mathcal{M}  :=  \bigcup_{i=1,2,3, \ldots} M_{1/i}
\]
where $M_{1/i}$ is the covering of $M$ with an $1/i$-contracting map to $S^n$
given by enlargeability, and where the distinct components are placed at
distance $\infty$ from each other. Note that this does
not mean that these components are independent from each other - for the coarse type 
of this space, global metric bounds (i.~e.~referring to all components at once) will be crucial.

The balloon space $B^n$ considered before now has to be replaced by the disjoint union 
\[  
       \mathcal{B}^n  :=  \bigcup_{i=1,2,3, \ldots} \underline{S}^n(i) \, ,
\]
where the notation $\underline{S}^n$ means that 
a whisker of infinite length is attached at the south pole of $S^n$, see Figure~\ref{fig:balloons}. 

\begin{figure}
\begin{center}
\epsfig{file=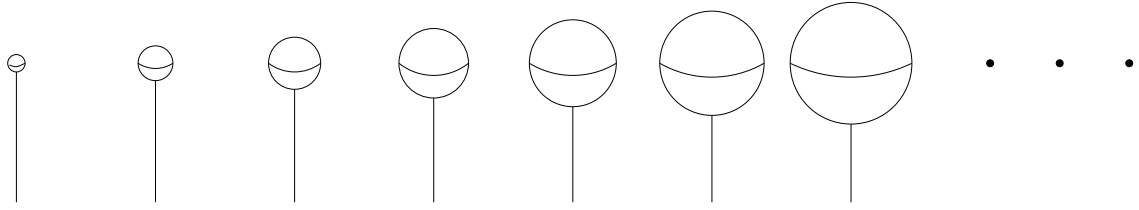,width=15cm}
\caption{The balloon space $ \mathcal{B}^n$, with infinite whiskers attached to each sphere.}\label{fig:balloons}
\end{center}
\end{figure}

In a similar fashion as in the proof of Theorem~\ref{t:first}, 
the fact that $M$ is $\hat{A}$-enlargeable leads to  a coarse map 
\[   
             \mathcal{M} \longrightarrow \mathcal{B}^n  
\]
which maps the coarse $K$-theoretic fundamental class of $\mathcal{M}$ to a nonzero 
class in the coarse $K$-homology of $\mathcal{B}^n$. 
The region outside the respective compact subset  in each $M_{1/i}$ 
(outside of which the map to $S^n$ is constant) is  mapped 
to the whisker in $\underline{S}^n(i)$ in order to get a proper map.

Our strategy for proving Theorem~\ref{t:main}  can now be 
outlined as follows. The information carried by 
\[
   \alpha(M) \in K_* ( C^*_{red} \pi_1(M) )
\]
will be split into two parts; one goes to the $K$-theory of the coarse algebra 
of $\mathcal{M}$ and the other is captured by introducing a collection 
of coefficient $C^*$-algebras for this coarse  algebra which is equivalent to 
$C^*_{red} \pi_1(M_{1/i})$ over $M_{1/i}$. The relevant element in the $K$-group 
of $C^*(\mathcal{M})$ is then sent to the $K$-theory 
of the coarse algebra of $\mathcal{B}^n$ with coefficients in the same 
collection of $C^*$-algebras. We will analyze this class by showing that the 
coarse assembly map with coefficients is an isomorphism for the space $\mathcal{B}^n$
and finally using a homological version of Atiyah's $L^2$-index theorem~\cite{MFA}. 

\subsection{Technical preliminaries for the proof of Theorem~\ref{t:main}}

Let $A$ be a $C^*$-algebra and let $M$ be a coarse metric space. 
\begin{defn} 
An {\it adequate} $M$-module with coefficients in $A$ is a Hilbert $A$-module $H$ 
together with a left $C_0(M)$-action, i.~e.~a $C^*$-homomorphism 
\[
   C_0(M) \longrightarrow \BB (H) \, , 
\]
where $\BB (H)$ is the $C^*$-algebra of adjointable $A$-module 
homomorphisms $H \to H$; see~\cite[Definition VI.13.2.1]{Bl}. In addition, 
the module $H$ is required to be sufficiently large in the  sense of~\cite[Defintion 4.5]{HPR}. 
\end{defn} 

For an adequate $M$-module $H$ the coarse algebra $C^*(M;A)$ is defined as the algebra 
generated by the locally $A$-compact finite propagation operators in $\BB (H)$. 
Note that by~\cite[Proposition 5.5]{HPR} this construction is functorial after composing with
the $K$-theory functor.

We define the coarse $K$-homology of $M$ with coefficients in $A$ as 
\[
    KX_* (M;A) := \lim_{\ra_i} KK_* (C_0 (|\mathfrak{U}_i|); A)  \, , 
\]
where $(|\mathfrak{U}_i|)_i$ is a coarsening sequence for $M$. Recall 
that $KK_*(C_0(M) ; A)$ is a locally finite theory. If $M$ is a manifold we
can start with a (fine) covering whose nerve is homeomorphic to $M$ and will
therefore obtain a canonical map $K^{lf}_*(M;A)\longrightarrow KX_*(M;A)$, called a
coarsening map.

In analogy with~\cite[Chapter 5]{R} we have an assembly map 
\[
  \mu \colon   KK_*(C_0(M); A) \longrightarrow K_*(C^*(M;A)) 
\]
which factors through $KX_*(M;A)$
and defines the coarse assembly map 
\[
   \mu_{\infty} \colon KX_*(M;A) \longrightarrow K_*(C^*(M;A)) \, ,
\]
compare p.~75 of~\cite{R}.

As discussed in Section~\ref{s:BC} above, the coarse Baum-Connes conjecture predicts that 
for a metric space $M$ of bounded geometry, the coarse assembly map 
\[
     \mu_{\infty} \colon KX_*(M ; \C)  \longrightarrow K_*(C^*(M)) 
\]
is an isomorphism. Here $C^*(M) = C^*(M ; \C)$ is the usual coarse algebra of locally compact finite 
propagation operators on $M$ as considered before. We shall now prove a generalized version of this 
conjecture with coefficients for the spaces 
\[
   \mathcal{B}^n  :=  \bigcup_{i=1,2,3, \ldots} \underline{S}^n(i)  
\]
introduced in Subsection~\ref{s:outline}. Recall that $\underline{S}^n$ denotes an $n$-sphere to which  
a whisker of infinite length has been attached at the south pole.
\begin{prop}\label{p:coarseass} 
Let $A$ be a $C^*$-algebra. Then the coarse assembly map 
\[
     KX_*(\mathcal{B}^n;A) \stackrel{\mu_{\infty}}{\longrightarrow} K_*(C^*(\mathcal{B}^n;A))
\]
is an isomorphism.
\end{prop}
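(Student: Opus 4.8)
The plan is to decompose $\mathcal{B}^n$ into its basic pieces, compute the coarse assembly map on each piece using a Mayer--Vietoris argument, and then assemble the global statement by a careful passage to the limit that keeps track of the uniform (``global'') metric bounds on $\mathcal{B}^n$. Concretely, the space $\mathcal{B}^n = \bigcup_i \underline{S}^n(i)$ is a disjoint union (at distance $\infty$) of spheres $S^n(i)$ each carrying an infinite whisker, and the first point is that coarsely each $\underline{S}^n(i)$ is the union of a bounded piece (the sphere, which is coarsely a point) and a ray $[0,\infty)$, glued along a point. For a single ray $[0,\infty)$ the coarse algebra $C^*([0,\infty);A)$ has vanishing $K$-theory (it is, up to the relevant stabilisation, the cone on $A\otimes\mathbb{K}$, or one argues directly via an Eilenberg swindle shifting towards infinity), and $KX_*([0,\infty);A)=0$ as well, so the coarse assembly map is trivially an isomorphism there; for a coarsely bounded space the coarse assembly map is the ordinary assembly/Paschke-duality identification $KK_*(C_0(\mathrm{pt});A)\to K_*(A\otimes\mathbb{K})$, again an isomorphism. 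First I would therefore establish the single-balloon case $\mathcal{B}^n=\underline{S}^n(i)$ (one sphere, one whisker) by writing the coarse Mayer--Vietoris sequence of~\cite{R} (or of~\cite{HPR} in the coefficient setting) for the decomposition into the ray and a bounded neighbourhood of the sphere, and comparing it with the corresponding Mayer--Vietoris sequence in $KX_*(-;A)$ via the natural transformation $\mu_\infty$; the five lemma then gives the isomorphism for $\underline{S}^n(i)$.

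Next I would handle the infinite disjoint union. The subtle point, flagged in the paper's own remarks, is that $\mathcal{B}^n$ is \emph{not} a coarse disjoint union in the naive sense where everything splits as a product over $i$: one must use the coarsening sequence $(\mathfrak{U}_j)_j$ for the whole space $\mathcal{B}^n$ and take the direct limit, and both $C^*(\mathcal{B}^n;A)$ and $KX_*(\mathcal{B}^n;A)$ are built from operators, resp.\ nerves, that see all components simultaneously subject to uniform propagation/cover bounds. However, because the components sit at distance $\infty$, any finite-propagation operator on $\mathcal{B}^n$ decomposes blockwise over the components, so $C^*(\mathcal{B}^n;A)$ is the ``$\ell^\infty$-type'' (uniform) product of the $C^*(\underline{S}^n(i);A)$ — more precisely it fits, as in the treatment of coarse disjoint unions in~\cite{R}, into the analogue of a short exact sequence relating $\prod_i C^*(\underline{S}^n(i);A)$ and $\bigoplus_i C^*(\underline{S}^n(i);A)$ — and similarly $KX_*(\mathcal{B}^n;A)$ is assembled from the $KX_*(\underline{S}^n(i);A)$ as a product modulo a sum, exactly as in Proposition~\ref{p:B}. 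The map $\mu_\infty$ for $\mathcal{B}^n$ is then identified with the product (resp.\ product-modulo-sum) of the maps $\mu_\infty$ for the individual $\underline{S}^n(i)$, each of which is an isomorphism by the previous paragraph, and one concludes. The inputs needed here are: functoriality of $C^*(-;A)$ on $K$-theory (\cite[Proposition 5.5]{HPR}), the compatibility of $\mu_\infty$ with the coarsening maps and with coarse Mayer--Vietoris, and the standard fact that $K$-theory takes the relevant products/colimits of $C^*$-algebras to products/colimits of $K$-groups (the latter requiring a mild continuity/exactness argument, or a Milnor $\lim^1$ argument for the direct limit over the coarsening sequence).

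The main obstacle I expect is precisely the bookkeeping of the uniform bounds across the infinitely many components — making rigorous the claim that ``distance $\infty$ between components'' forces a clean block decomposition of the coarse algebra and of the coarsened nerves, \emph{uniformly} in the coarsening parameter, so that $\mu_\infty(\mathcal{B}^n)$ really is the product of the $\mu_\infty(\underline{S}^n(i))$ and not merely a map between two product-modulo-sum groups that one has to analyse further. This is where the coefficient $C^*$-algebra $A$ genuinely enters and where one must check that the results of~\cite{HPR} on coarse algebras with coefficients (existence and functoriality of $C^*(M;A)$ for adequate modules, coarse Mayer--Vietoris with coefficients) apply to these non-compact, non-geodesic spaces. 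Once that is set up, the sphere-plus-whisker computation and the passage to the disjoint union are routine, and the proof — which, as the excerpt notes, also yields the coarse Baum--Connes conjecture for the $B^n$ of Proposition~\ref{p:coarseBC} by the same argument with $A=\C$ and the whiskers replaced by the connecting ray — is complete.
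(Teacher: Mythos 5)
Your first step (a single balloon $\underline{S}^n(i)$, treated by the sphere-plus-ray decomposition, flasqueness of the ray, and the five lemma) is fine, but the passage to the infinite union is precisely where the whole difficulty of Proposition~\ref{p:coarseass} sits, and your treatment of it has a genuine gap. Because the definition of $C^*(\mathcal{B}^n;A)$ imposes a \emph{uniform} propagation bound across all components, this algebra is a proper subalgebra of $\prod_i C^*(\underline{S}^n(i);A)$, and there is no general ``product modulo sum'' short exact sequence identifying its $K$-theory with the product-mod-sum of the $K_*(C^*(\underline{S}^n(i);A))$; moreover $K$-theory does not commute with infinite products of $C^*$-algebras (already $K_0(\prod_n M_n(\C))\to\prod_n K_0(M_n(\C))$ fails to be surjective), so even granting a blockwise description one cannot simply identify $\mu_\infty(\mathcal{B}^n)$ with the product of the maps $\mu_\infty(\underline{S}^n(i))$. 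Indeed, no formal argument of the shape ``the components lie at distance $\infty$ and each satisfies coarse Baum--Connes, hence so does the union'' can exist: coarse disjoint unions of bounded spaces (expander sequences) are the standard counterexamples to the coarse Baum--Connes conjecture, although every single component, being bounded, satisfies it trivially. What makes $\mathcal{B}^n$ work is the specific geometry of the whiskers, and this has to enter the argument quantitatively rather than through a formal limit.

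This is exactly how the paper proceeds, and its route is different from yours in both steps. For $n>0$ there is no balloon-by-balloon analysis at all: one uses the coarsely excisive decomposition $\mathcal{B}^n=\mathcal{D}^n_+\cup\mathcal{D}^n_-$ into the unions of hemispheres-with-whiskers, shows that both $KX_*(\mathcal{D}^n_\pm;A)$ and $K_*(C^*(\mathcal{D}^n_\pm;A))$ vanish (the latter because these spaces are flasque, via an explicit shift map), and reduces by Mayer--Vietoris to $\mathcal{B}^{n-1}$. The entire weight is then carried by the base case $n=0$, which is a direct computation with the dual algebras: one shows $\lim_{k} K_*(D^*(\mathcal{B}^0_k,A))=0$ by exhausting $D^*(\mathcal{B}^0_k,A)$ by the subalgebras $D^*(\mathcal{B}^0_k,A)_l$ of operators that do not interact between $\{0\}$ and $[2i,\infty)$ for $i\ge l$; these split as $D^*(\Omega,A)\times\prod_{i\ge l}\BB(H_A)$, the second factor being a multiplier algebra of $\prod_{i\ge l}\KK(H_A)$ with vanishing $K$-theory, while $\Omega$ is coarsely equivalent to a flasque union of rays, so that in the limit over $k$ the surviving $\bigoplus_{i\ge k}\Z$ dies. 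Note that the uniform propagation bound is used exactly here (it is what makes the algebras $D^*(\mathcal{B}^0_k,A)_l$ exhaust $D^*(\mathcal{B}^0_k,A)$), and the whiskers are indispensable (they make $\Omega$ flasque; for bare spheres the analogous statement would be false). To repair your proof you would have to supply an argument of this kind for the disjoint-union step; as written, declaring that step ``routine'' is the gap.
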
 
This Proposition plays the same role  in the proof of Theorem~\ref{t:main} as did Proposition~\ref{p:coarseBC} 
in the proof of Theorem~\ref{main}. The proof of Proposition~\ref{p:coarseass} is by induction on $n$ 
and uses a Mayer-Vietoris argument. Let us first collect some facts concerning Mayer-Vietoris sequences
for coarse $K$-theory and for $K$-theory of coarse algebras.
\begin{enumerate}
\item The K-theory of coarse $C^*$-algebras with coefficients has a
  Mayer--Vietoris sequence for coarsely excisive decompositions, by~\cite[Corollary 9.5]{HPR}. 
  Here a decomposition $X=X_1\cup X_2$ is {\it coarsely excisive} if for each $R>0$ there is an $S>0$ 
  such that the intersection of the $R$-neighborhoods of $X_1$ and $X_2$ is contained in the $S$-neighborhood 
  of $X_1\cap X_2$. 
\item The theory $KX_*(\cdot,A)$ has a Mayer--Vietoris sequence under the same assumptions as above.
  Recall that coarse $K$-homology with coefficients in $A$ is obtained by
  calculating $KK(C_0(|\mathfrak{U}_i|),A)$ and then passing to  a limit. We now use
  that $X\longmapsto KK(C_0(X),A)$ is a homology theory, and therefore has a Mayer--Vietoris
  sequence (compare~\cite[Section 5]{R} for a proof if $A=\mathbb{C}$), and
  observe that the coarse excisiveness implies that this
  Mayer--Vietoris sequence is compatible with coarsening (the intersection of the
  nerves of the coarsening sequences for $X_1$ and $X_2$ can be chosen to give a
  coarsening sequence for $X_1\cap X_2$). By passing to the direct limit, we
  obtain the required Mayer--Vietoris sequence in coarse K-homology. (Note that the direct
  limit functor is an exact functor). 
\item The two Mayer--Vietoris sequences are natural with respect to the coarse
  assembly map.
\end{enumerate}
In addition, we recall that a coarse space $X$ is called {\it flasque} if there is a coarse map 
$t\colon X\to X$ which is close to the identity (in the sense of coarse geometry) and such that
for each $R>0$ there is an $S>0$ such that $d(t^n(x),t^n(y))<S$ for each $n\in\mathbb{N}$ and each 
$x,y$ with $d(x,y)<R$, and such that for each compact set $K \subset X $ we have 
$t^n(K)\cap K=\emptyset$ for all sufficiently large $n$; cf.~\cite[Section 10]{HPR}.
\begin{lem}{\cite[Proposition 10.1]{HPR}} If a coarse space $X$ is flasque, then the K-theory of its 
coarse $C^*$-algebra with coefficients vanishes. 
\end{lem}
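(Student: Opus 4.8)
The plan is to run the Eilenberg swindle of \cite[Section~10]{HPR} directly on the coarse algebra. First, fix an adequate $X$-module $H$ with coefficients in $A$ and form the infinite inflation $\hat H:=\bigoplus_{n\ge 0}H$, with $C_0(X)$ acting diagonally and with $H_n$ denoting the $n$-th summand; this is again an adequate $X$-module. Writing $C^*(X;H;A)$ to record which module is used, \cite[Proposition~5.5]{HPR} tells us that $K_*(C^*(X;A))$ is independent of the choice of sufficiently large module, the inclusion of the zeroth summand
\[
   \iota\colon C^*(X;H;A)\longrightarrow C^*(X;\hat H;A),\qquad T\longmapsto T\oplus 0\oplus 0\oplus\cdots
\]
realising this identification; so it suffices to prove $\iota_*=0$.

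Next, let $t\colon X\to X$ be a flasque structure map, and use adequacy to choose for each $n\ge 0$ an isometry $V_n\colon H\to H$ covering the coarse self-map $t^n$, with $V_0=\mathrm{id}_H$ and with a propagation bound independent of $n$. Set
\[
   \theta(T):=\bigoplus_{n\ge 0}V_nTV_n^{*}\,.
\]
Two clauses of flasqueness make this well defined. The equi-control clause — that for each $R$ the sets $\{(t^n x,t^n y):d(x,y)\le R\}$ lie in a single controlled set, uniformly in $n$ — forces $\mathrm{prop}(V_nTV_n^{*})$ to be bounded independently of $n$, so that $\theta(T)$ has finite propagation on $\hat H$. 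The clause that $t^n(X)$ eventually misses every bounded set forces $\phi V_n=0$ for all large $n$ whenever $\phi\in C_c(X)$, so that $\phi\,\theta(T)$ is a finite sum of locally $A$-compact operators. Together these give $\theta(T)\in C^*(X;\hat H;A)$, and $\theta$ is a $*$-homomorphism because the $V_n$ are isometries.

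Then the swindle is formal. On one hand $\theta=\iota\oplus\theta'$ as an orthogonal block sum with $\theta'(T):=\bigoplus_{n\ge 1}V_nTV_n^{*}$ supported on $H_1\oplus H_2\oplus\cdots$, so that $\theta_*=\iota_*+\theta'_*$ on $K_*$. On the other hand the operator $S:=\bigoplus_{n\ge 0}\bigl(V_{n+1}V_n^{*}\colon H_n\to H_{n+1}\bigr)$ satisfies $S\theta(T)S^{*}=\theta'(T)$, and $S$ is a finite-propagation partial isometry: the block $V_{n+1}V_n^{*}$ is, up to a uniformly bounded error, a cover of $t$ itself (it approximately undoes $t^n$ and reapplies $t^{n+1}$), so its support stays within bounded distance of the graph of $t$, and $t$ is close to the identity. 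Since conjugation by a finite-propagation partial isometry is trivial on the $K$-theory of a coarse algebra, $\theta'_*=\theta_*$; hence $\theta_*=\iota_*+\theta_*$ in $K_*(C^*(X;\hat H;A))$, so $\iota_*=0$ and therefore $K_*(C^*(X;A))=0$. The coefficient algebra $A$ plays no essential role anywhere.

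The hard part is the second step: checking that $\theta$ genuinely takes values in $C^*(X;\hat H;A)$. One must verify that the block-diagonal sum converges in the strict topology on the adjointable operators of $\hat H$, that it has finite propagation — which is exactly where the equi-control hypothesis is indispensable, since otherwise the propagations of the blocks $V_nTV_n^{*}$ would be unbounded — and that it is locally $A$-compact, which is where the escape to infinity of the images $t^n(X)$ is needed. Making the compatibility of $S$ with the Hilbert $A$-module structure precise requires the usual care with adjointable operators over $A$, carried out as in \cite[Section~10]{HPR}, but raises no genuinely new issue.
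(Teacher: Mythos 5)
Your argument is correct and is essentially the proof that the paper delegates to its citation: the paper gives no argument of its own for this lemma, and the Eilenberg swindle you run (inflate the module, map $T$ to the block sum of conjugates $V_nTV_n^*$ by isometries covering $t^n$, split off the zeroth summand, and absorb the remainder via the shift $S$ covering $t$) is exactly the argument of \cite[Proposition 10.1]{HPR}, with the coefficient algebra $A$ indeed playing no role. The one point to tighten is your appeal to ``conjugation by a finite-propagation partial isometry is trivial on $K$-theory,'' which is false for general partial isometries (a compression need not act trivially); what you actually need, and what does hold here because $V_n^*V_n=1$ gives $S^*S\,\theta(T)=\theta(T)=\theta(T)\,S^*S$, is that the initial projection $S^*S$ acts as a unit on the image of $\theta$, after which the standard dilation of $S$ to a unitary multiplier shows $\theta'_*=\theta_*$ and the swindle closes as you state.
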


\begin{proof}[Proof of Proposition~\ref{p:coarseass}] 
We start by describing the induction step based on the above statements. 

Let $n > 0$ and consider the decomposition 
\[
    \mathcal{B}^n=\mathcal{D}_+^n\cup \mathcal{D}_-^n \, , 
\]
where $\mathcal{D}_+^n$ and $\mathcal{D}_-^n$ are the unions of the left and right hemispheres in 
the $n$-spheres contained in  $\mathcal{B}^n$, both with infinite whiskers attached at the south pole 
(considered as a point on the boundary of each of the two hemispheres). This decomposition is coarsely
excisive. By the Mayer--Vietoris principle, it suffices to prove that the coarse assembly map is an 
isomorphism for $\mathcal{D}_{\pm}^n$ and for their intersection. For the intersection, this is an immediate 
consequence of the induction hypothesis. Concerning the spaces $\mathcal{D}_{\pm}^n$, we will prove that 
the coarse $K$-theory  as well as the $K$-theory of the associated coarse $C^*$-algebras (both with coefficients 
$A$) vanish.  The last statement holds because the spaces $\mathcal{D}^n_{\pm}$ are flasque. The required map 
$t$ is  the composition of two maps.  The first one moves all points on the half spheres a unit distance to 
the center of the half sphere along great arcs connecting the boundary to this center (points close to the center are 
mapped to the center). The second one is  an isometric rotation, mapping this center two units toward 
the south pole; points which would be rotated out of the half sphere are mapped to the whisker, to the point with 
same distance to the south pole as would have been the distance of the rotated point to the south pole in the
full sphere. Finally, points on the whisker are just moved one unit further away from the south pole. 

Concerning the first statement, we observe that there is a coarsening sequence $(\mathfrak{U}_i)$ 
for $\mathcal{D}^n_{\pm}$, so that the first $i$ components of the nerve of $\mathfrak{U}_i$ 
are properly homotopy equivalent to a ray $[0,\infty)$. Because the locally finite $K$-homology 
of such a ray vanishes, we get indeed 
\[
   KX_*(\mathcal{D}^n_{\pm};A) = \lim_{\lra_i} K^{lf}_*(|\mathfrak{U}_i|;A) = 0 \, . 
\]
After finishing the induction step, it remains to verify that  
the assembly map is an isomorphism for $n=0$. In
this case, we have a disjoint union decomposition 
\[
     \mathcal{B}^0 = \bigcup_i \big( \{0\}\cup [2i,\infty) \big) 
\]
where the spaces $\{0\} \cup [2i, \infty)$ carry the metric induced from $[0,\infty)$.
Here, despite the fact that for different $i$ the subspaces $\{0\} \cup [2i, \infty) \subset \mathcal{B}^0$ 
are at distance infinity, it is crucial that the definition of the
coarse $C^*$-algebra requires uniform bounded propagation for its
operators. For each $k \in \N$, we define the space 
\[
     \mathcal{B}^0_k = \bigcup_{i = 1, \ldots, k-1} [0,\infty) \; \cup \; \bigcup_{i = k, k+1, \ldots} \big( \{0\} \cup [2i, \infty) \big) 
\]
which can be identified with the nerve of the $k$-th member of an appropriate coarsening sequence for $\mathcal{B}^0$. For 
each $k$, we have a canonical coarse equivalence
\[
      \mathcal{B}^0_k \to \mathcal{B}^0
\]
mapping $[0,2i)$ to $\{0\}$ for $i \leq k-1$. We need to show that the induced map 
\[
    \lim_{\lra_k} K^{lf}_*(\mathcal{B}^0_k, A) \to \lim_{\lra_k} K_*(C^*(\mathcal{B}^0_k,A)) \cong K_*(C^*(\mathcal{B}^0,A)) 
\]
is an isomorphism. Equivalently, we will prove that 
\[
    \lim_{\lra_k} K_*(D^*(\mathcal{B}^0_k,A)) = 0 \, . 
\]
Let $k$ be fixed and let us work with the adequate $\mathcal{B}^0_k$-module 
of $L^2$-function with values in the standard Hilbert $A$-module 
\[
    H_A = l^2(\N) \otimes A \, . 
\]
For each $l > k$,  we define a $C^*$-subalgebra 
\[
       D^*(\mathcal{B}^0_k,A)_l \subset D^*(\mathcal{B}^0_k,A) \, , 
\]
the closure of the set of all pseudolocal finite propagation operators that for $i \geq l$ do 
not interact between the two components of 
\[
        \{0\} \cup [2i,\infty) \subset \mathcal{B}^0_k \, . 
\]
Note that this condition is weaker than  restricting to 
operators of propagation less than $2l$ - this last condition does 
not define a subalgebra of $D^*(\mathcal{B}^0_k,A)$. Because $K$-theory commutes 
with direct limits, 
\[
    K_*(D^*(\mathcal{B}^0_k),A)  = \lim_{\lra_l} K_*(D^*(\mathcal{B}^0_k)_l,A) \, . 
\]
The last $K$-theory groups can be calculated explicitely. Note the canonical decomposition 
\[
   D^*(\mathcal{B}^0_k)_l = D^*(\Omega,A ) \times  \prod_{i \geq l} \BB(H_A)\, , 
\]
where 
\[
    \Omega = \bigcup_{i=1, \ldots, k-1} [0,\infty) \, \cup \, \bigcup_{i = k, \ldots, l-1} ( \{0\} \cup [2i,\infty)) 
      \, \cup \, \bigcup_{i =l, l+1, \ldots} [2i, \infty) \, . 
\]
The algebra $\prod_{i\geq l} \BB(H_A)$ is the multiplier algebra of $\prod_{i \geq l} \KK(H_A)$ and 
therefore has vanishing $K$-theory, cf.~\cite{WO}. The $K$-theory of the algebra $D^*(\Omega,A)$  
appears in the long exact sequence 
\[
    \ldots \ra K_*(C^*(\Omega,A)) \to K_*(D^*(\Omega,A)) \to K^{lf}_{*-1}(\Omega,A) \stackrel{\mu_{\infty}}{\to} 
               K_{*-1}(C^*(\Omega,A)) \to \ldots  
\]
and can therefore be studied by examining the coarse assembly map $\mu_{\infty}$. Now $\Omega$ is 
coarsely equivalent to the flasque space $\bigcup_{i \in \N} [0, \infty)$ and hence 
\[
    K_*(C^*(\Omega,A)) = 0 \, . 
\]
On the other hand, 
\[  
    K^{lf}_0(\Omega,A) = \prod_{k \leq i \leq l-1} \Z  \, , \qquad \qquad~~ K^{lf}_1(\Omega,A) = 0 \, , 
\]
by the fact that the locally finite $K$-homology of a disjoint union is the direct product 
of the locally finite $K$-homologies of the individual components. Altogether, we have 
\[
  K_1(D^*(\mathcal{B}^0_k,A)) = \lim_{\lra_l}  K_1(D^*(\mathcal{B}^0_k,A)_l) = \bigoplus_{i \geq k} \Z 
\]
and $K_0(D^*(\mathcal{B}^0_k),A) = 0$. We conclude
\[
  \lim_{\lra_k} K_1(D^*(\mathcal{B}^0_k,A)) = \lim_{\lra_k} \bigoplus_{i\geq k} \Z = 0 
\]
which finishes the proof of Proposition~\ref{p:coarseass}. 
\end{proof} 

For the proof of Theorem~\ref{t:main} in the next section we need a generalization of the construction 
underlying Lemma 5.14 in~\cite{R}. Let $M$ be a complete connected Riemannian manifold of positive 
dimension with fundamental group $\pi$. We denote by $\widetilde M$ the universal cover of $M$.
The adequate $C_0(\widetilde M)$-module $L^2(\widetilde M)$ carries an induced (right) unitary $\pi$-operation.
As usual, let $C_{\pi}^*(\widetilde M)$ be the $C^*$-algebra generated by locally compact $\pi$-invariant 
operators $L^2(\widetilde M)  \longrightarrow L^2( \widetilde M )$ of finite propagation. We furthermore consider 
the adequate $C_0(M)$-module $L^2(M,L)$, where 
\[
    L = \widetilde M \times_{\pi} C^*_{red} \pi 
\]
is the Mishchenko line bundle on $M$ (we equip $\widetilde M$ and $C^*_{red}\pi$ with the canonical 
right respectively left $\pi$-actions). Note that $L^2(M,L)$ is a Hilbert-$C^*_{red} \pi$-module in a canonical way. 
Denoting by $C^*(M, C^*_{red}\pi)$ the algebra generated by locally compact $C^*_{red}\pi$-linear finite propagation 
operators $L^2(M,L) \longrightarrow L^2(M,L)$ we wish to define a $C^*$-algebra map   
\[
    \psi \colon C^*_{\pi}(\widetilde M)  \longrightarrow C^*(M, C^*_{red}\pi) \, . 
\]
The construction of $\psi$ is almost tautological. Let 
\[
     T \colon L^2(\widetilde M) \longrightarrow L^2(\widetilde M) 
\]
be a $\pi$-invariant locally compact operator of propagation $R > 0$ and let
$\sigma\colon M \longrightarrow L$ be an $L^2$-section
of the Mishchenko line bundle. We can identify $\sigma$ with a
$\pi$-equivariant map
\[
     f \colon \widetilde M \longrightarrow C^*_{red} \pi \, . 
\]
Here $\pi$ acts on the right on $C^*_{red} \pi$ by $\xi \cdot \gamma := \gamma^{-1} \cdot \xi$. Let 
us assume for the moment that $f$ has image contained in $\C[\pi] \subset C^*_{red} \pi$. This map 
can be considered as a $\pi$-labeled family of maps $\widetilde M \longrightarrow \C$. To each of these
maps, the operator $T$ is applied. Because $T$ is $\pi$-equivariant, the resulting map 
\[
     T(f) \colon \widetilde M \longrightarrow \C[\pi]
\]
is again $\pi$-equivariant and hence induces a section of the bundle 
\[ 
    \widetilde M \times_{\pi} \C[\pi] \longrightarrow  M \, . 
\] 
The construction for general $f$ is by completion. By definition, the
section of $L \longrightarrow M$ obtained in this way is equal to $\psi(T)(\sigma)$. 
By construction, 
\[
   \psi(T) \colon L^2(M,L) \longrightarrow L^2(M,L) 
\]
is locally compact and has propagation less or equal to $R$. (Notice that part of the propagation may go in the 
$C^*_{red} \pi$-direction.) We emphasize that the map $\Psi$ is not in general an isomorphism 
of $C^*$-algebras,  because the propagation into the $C^*_{red} \pi$ direction is 
not required to be bounded in $C^*(M, C^*_{red}\pi)$. 

\begin{remark}\label{rem:non-free}
  Note that this construction also works if, instead of $\pi$, a subgroup $H$ acts freely
  on $\widetilde M$, and we work with the Hilbert $C^*_{red}\pi$-module bundle
  $\widetilde M\times_HC^*_{red}\pi$.
\end{remark}

\section{Proof of Theorem~\ref{t:main}}\label{s:proofmain}

By a suspension argument, we may restrict to the case of even $n$.   
For each $i \in \{1,2,3, \ldots \}$, choose a connected cover 
\[
    M_{1/i} \longrightarrow M 
\]
together with a $\frac{1}{i}$-contracting map 
\[
    f_{1/i} \colon M_{1/i} \longrightarrow S^n
\]
which is constant (with value equal to the south pole $S \in S^n$) 
outside a compact subset of $M_{1/i}$ and which is of nonzero $K$-theoretic degree $z_i \in \Z$.
The maps $f_{1/i}$ induce a coarse map 
\[
   \phi \colon \mathcal{M}=\amalg_{i\in\N} M_{1/i} \longrightarrow \mathcal{B}^n \ .
\]
Thanks to the whisker present in $\underline{S}^n$, we simply map a point on
the region outside the compact set, where $f_{1/i}$ is constant, to the point on the whisker
whose distance to the origin is the distance to the compact set. Further, we set  
\[
    \Gamma_i = \pi_1(M_{1/i}) 
\]
with respect to an arbitrary basepoint and consider the adequate $M_{1/i}$-module with 
coefficients in $C^*_{red} \Gamma_i$ 
defined by $L^2(M_{1/i}, L_i)$, the space of $L^2$-sections of the Mishchenko line bundle 
\[
       L_i = \widetilde M \times_{\Gamma_i} C^*_{red} \Gamma_i \longrightarrow M_{1/i} \, . 
\]
Recalling that $\widetilde{\mathcal{M}}$ is a 
disjoint union of copies of $\widetilde M$, we have a transfer map 
\[
    \Delta \colon C^*(\widetilde{M}) \longrightarrow C^*( \widetilde{ \mathcal{M} }  )
\]
induced by the diagonal embedding 
\[
     \BB( L^2(\widetilde{M} ))  \longrightarrow  \prod_i \BB (L^2 (\widetilde{M} )) \subset   
     \BB ( L^2( \widetilde{\mathcal{M} } )) \, .  
\]
This map restricts to a map between algebras of  locally compact 
operators of finite propagation and hence induces the map $\Delta$. 

Let
\[
       \Gamma  := \bigoplus_i \Gamma_i \subset \bigoplus_i \pi_1(M) \, . 
\] 
We define the Mishchenko bundle 
\[
  \mathcal{L} := \amalg_{i\in\N}\widetilde{{M}_{1/i}}  \times_{ \Gamma_i } C^*_{red}\Gamma
     \to \amalg_i M_{1/i}=\mathcal{M} \ .
\]   
Our argument uses $C^*_{red}\Gamma$
as a coefficient $C^*$-algebra where we work with the adequate $\mathcal{M}$-module
$L^2(\mathcal{M}, \mathcal{L})$. (Here we assume again that $\dim M > 0$ to
make sure that this module is adequate.)

\begin{rem} 
The heuristic meaning of this construction is that we would have to choose a coefficient
$C^*$-algebra on $\mathcal{M}$ that varies from component to component
and is equal to $C^*_{red} \Gamma_i$ over $M_{1/i}$; we artifically blow this
up to $C^*_{red}\Gamma$ to avoid the necessity to develop additional theory.
\end{rem} 

By performing the construction at the end of Section~\ref{s:coarsecoeff}, in
particular Remark~\ref{rem:non-free}, on each component of $\mathcal{M}$
separately, we get a map of $C^*$-algebras  
\[
 \psi \colon C_\Gamma^*(\widetilde{\mathcal{M}}) \longrightarrow
 C^*(\mathcal{M} ; C^*_{red} \Gamma)  \,    
 \]
as follows.
Fix an $L^2$-section $\sigma$ of the Mishchenko line bundle $\mathcal{L}$, which is a bundle
of free Hilbert-$C^*_{red}\Gamma$-modules of rank one over $\mathcal{M}$. The section $\sigma$ 
is the direct sum of sections $(\sigma_i)$, where $\sigma_i$ is the restriction to $\widetilde{M_{1/i}}$, a
section of the restriction of $L$ to the Mishchenko bundle over $M_{1/i}$.
Moreover, $\Gamma$ acts on $\widetilde{{M}_{1/i}}$ via the projection $\Gamma\to\Gamma_i$ with a 
free and discrete action of $\Gamma_i$. Consequently, Remark \ref{rem:non-free} applies and we can 
define $\psi(T)(\sigma)$ as the direct sum $\psi(T)(\sigma):=(\psi(T_i)(\sigma_i))_{i\in\N}$. With this definition
$\psi(T)$ has propagation $R$, and this gives the required homomorphism of $C^*$-algebras 
$$
\psi\colon C_\Gamma^*(\widetilde{\mathcal{M}})\longrightarrow
C^*(\mathcal{M};C^*_{red}\Gamma) \ .
$$

If $X$ is a topological space and $A$ a $C^*$-algebra, we define
$K^{lf}(X;A):= KK(C_0(X),A)$.

The proof of Theorem~\ref{t:main} now proceeds via the following commutative
diagram, where  we set $\pi:= \pi_1(M)$:
\begin{equation*}\label{eq:bigdiagram}
  \begin{CD}
    K_0(M) @>{\iso}>{PD}> K_1(D_\pi^*\widetilde M/C_\pi^*\widetilde M) @>{\partial}>> K_0(C_\pi^*\widetilde M) 
     \xrightarrow{\iso} K_0(C^*_{red}\pi)\\
@VV{tr}V  @VV{tr}V   @VV{tr}V\\
   K^{lf}_0(\mathcal{M}) @>{\iso}>{PD}> K_1(D_{\Gamma}^*\widetilde{\mathcal{M}}/C_{\Gamma}^*\widetilde{\mathcal{M}})
   @>{\partial}>> K_0(C_{\Gamma}^*\widetilde {\mathcal{M}})\\
 @VV{[\mathcal{L}] \cap -}V  && @VV{\psi}V\\
    K^{lf}_0 ({\mathcal{M}},%A_\Gamma%
C^*_{red} \Gamma
) @>>> KX_0(\mathcal{M},%A_\Gamma%
C^*_{red} \Gamma
) @>>{\mu_\infty}>
    K_0(C^*(\mathcal{M},%A_\Gamma%
C^*_{red} \Gamma
))\\
@VV{\phi_*}V @VV{\phi_*}V @VV{\phi_*}V\\
K^{lf}_0(\mathcal{B}^n,%A_\Gamma%
C^*_{red} \Gamma
) @>{cX}>> KX_0(\mathcal{B}^n,%A_\Gamma%
C^*_{red} \Gamma
) @>{\iso}>{\mu_\infty}>
 K_0(C^*(\mathcal{B}^n,%A_\Gamma%
C^*_{red} \Gamma
)) \ .
  \end{CD}
\end{equation*}
The horizontal arrows denoted PD are Paschke duality isomorphism, compare
e.g.~\cite[Section 3]{Roean} and~\cite{HR}. The vertical maps $tr$ are
transfer maps, on the level of $C^*$ and $D^*$ they are simply given by
diagonal embedding. The map $\partial$ is a boundary map in a long exact $K$-theory
sequence, the compositions of $tr$ and $\partial$ are Baum--Connes assembly maps
$\mu$ (compare~\cite{Roean} again). Finally, $cX$ is the coarsening map from
locally finite to coarse K-homology.

In order to detect the nonvanishing of $\alpha(M)$ claimed by Theorem~\ref{t:main}, 
we chase  the $K$-theoretic fundamental class $[M]_K \in K_0(M)$ through this diagram.  
Because the coarse assembly map $\mu_{\infty}$ is an isomorphism for the space $\mathcal{B}^n$ 
by Proposition~\ref{p:coarseass}, we only need to show that the image of $[M]_K$ under the map 
\[
     \omega = cX\circ \phi_*\circ ([\mathcal{L}]\cap -)\circ tr\colon  K_0( M ) \longrightarrow KX_0(\mathcal{B}^n ;
     %A_\Gamma%
C^*_{red} \Gamma 
) 
\]
is non-zero. This will ultimately follow from a form of Atiyah's  
$L^2$-index theorem. 

For a metric space $X$ and a $C^*$-algebra $A$, we define the $K$-homology with 
compact supports and coefficients in $A$ as 
\[
   KKR(X;A) := \lim_{Q \subset Z} KK(C_0(Q) ; A) \, , 
\]
where the limit goes over the set of compact subsets of $X$ ordered by inclusion. 
Note in particular that $KKR(X;\C) = K_0(X)$, if $X$ is homotopy 
equivalent to a $CW$-complex. The canonical inclusions 
$\C \to C_0(Q)$ induce an augmentation map 
\[
    \epsilon \colon  KKR(X;A) \to KK(\C ; A) \, . 
\]
 
Now let $E \to S^n$ be a finite dimensional unitary bundle so that (with $k := \dim E$)
the virtual bundle $E - \underline{\C}^k$ 
represents a generator of  $K^0(S^n,S)$ compatible with the orientation used for 
defining the $K$-theoretic  degrees $z_i$ at the beginning of this section.
(Recall that $n$ is even by assumption.) 
We define a map 
\[
   KK(C_0(\mathcal{B}^n) ; %A_\Gamma%
C^*_{red} \Gamma
 ) = \prod_i KK(C_0(\underline{S}^n(i)) ; %A_\Gamma%
C^*_{red} \Gamma 
) \lra \prod_i K_0( %A_\Gamma%
C^*_{red} \Gamma) \lra \prod_i \R
\]
where each component of the last map is the composition
\[
    KK(C_0( \underline{S}^n ) ; C^*_{red} H ) 
     \stackrel{- \cap [E - \underline{\C}^k]}{\lra}  
     KKR (\underline{S}^n  ; %A_\Gamma%
C^*_{red} \Gamma 
) \stackrel{\epsilon}{\lra} 
    KK(\C ; %A_\Gamma%
C^*_{red} \Gamma  
) \xrightarrow{\tau}\R \, . 
\]
Here, the first map is given by the cap product with the $K$-cohomology class with
compact support represented by the virtual bundle $[E -\underline{\C}^k]$. The last map is 
induced by the canonical trace $\tau\colon C^*_{red}\Gamma\to\C$.
%% Consider the map  
%% \[
%% K_0(A_\Gamma)=K_0(\bigoplus_{i\in\N}C^*_{red}\Gamma_i) \xrightarrow{\iso}   \bigoplus_i K_0 (
%% C^*_{red} \Gamma_i  )  \lra \prod_i \R
%% \] 
%% where the $i$-th component of this map is induced by the canonical trace 
%% \[
%%    tr_i C^*_{red} \Gamma  =  C^*_{red}(\bigoplus_{i\in\N} \Gamma_i)
%%    \to \C;
%% \] 
%% onto the $i$th summand and the second map is induced by the canonical traces 
%% \[
%%      C^*_{red} \Gamma_i \to \C \, . 
%% \]

\begin{lem} 
The composition 
 \[
   K_0^{lf}(\mathcal{B}^n ; C^*_{red} \Gamma  ) \lra \prod_i K_0(C^*_{red} \Gamma) 
   \lra \prod_i \R \lra (\prod_i \R) / (\bigoplus_i \R)
\] 
factors through the canonical map 
\[
    K_0^{lf}(\mathcal{B}^n ; C^*_{red} \Gamma  ) \lra KX_0 ( \mathcal{B}^n ; C^*_{red} \Gamma  ) \, . 
\]         
\end{lem}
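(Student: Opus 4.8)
The plan is to show that the composite map in the statement kills the image of the subgroup of $K_0^{lf}(\mathcal{B}^n; C^*_{red}\Gamma)$ consisting of classes supported on finitely many components $\underline S^n(i)$, and that this subgroup is precisely the kernel of the coarsening map to $KX_0(\mathcal{B}^n; C^*_{red}\Gamma)$. Recall that by construction $KK(C_0(\mathcal{B}^n); C^*_{red}\Gamma) \cong \prod_i KK(C_0(\underline S^n(i)); C^*_{red}\Gamma)$, since $\mathcal{B}^n$ is a disjoint union of the whiskered spheres $\underline S^n(i)$ placed at mutual distance $\infty$ and locally finite $K$-homology of a disjoint union is the direct product. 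The coarsening map $K_0^{lf}(\mathcal{B}^n; C^*_{red}\Gamma) \to KX_0(\mathcal{B}^n; C^*_{red}\Gamma)$ is computed as a direct limit over a coarsening sequence $(\mathfrak U_k)$, where passing from the $k$-th to the $(k+1)$-st stage collapses the $k$-th whiskered sphere onto its whisker (a ray), and the locally finite $K$-homology of a ray vanishes. So the image in the limit only depends on the "tail" of the sequence of components, i.e.\ the map factors through $\bigl(\prod_i KK(C_0(\underline S^n(i)); C^*_{red}\Gamma)\bigr) / \bigl(\bigoplus_i KK(C_0(\underline S^n(i)); C^*_{red}\Gamma)\bigr)$.

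First I would make this last assertion precise: the natural map $K_0^{lf}(\mathcal{B}^n; C^*_{red}\Gamma) \to KX_0(\mathcal{B}^n; C^*_{red}\Gamma)$ kills exactly the classes coming from $\bigoplus_i KK(C_0(\underline S^n(i)); C^*_{red}\Gamma)$. One inclusion is immediate: a class supported on $\underline S^n(1), \dots, \underline S^n(k-1)$ maps to zero once we pass the $k$-th stage of the coarsening sequence, because collapsing each of those finitely many whiskered spheres onto a ray kills the contribution (ray has trivial locally finite $K$-homology, and collapsing is a proper homotopy equivalence in the relevant sense). For the purposes of this lemma we in fact only need this one direction. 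Second, I would observe that the horizontal composite $K_0^{lf}(\mathcal{B}^n; C^*_{red}\Gamma) \to \prod_i K_0(C^*_{red}\Gamma) \to \prod_i \R$ — built componentwise out of the cap product with $[E - \underline{\C}^k]$, the augmentation $\epsilon$, and the trace $\tau$ — is manifestly compatible with the product decomposition over the components of $\mathcal{B}^n$: the $i$-th component of the output depends only on the restriction of the input class to $\underline S^n(i)$. Hence a class supported on finitely many components produces a sequence in $\prod_i \R$ that is eventually zero, i.e.\ lies in $\bigoplus_i \R$.

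Combining the two observations: since the composite $K_0^{lf}(\mathcal{B}^n; C^*_{red}\Gamma) \to \prod_i \R$ sends $\bigoplus_i KK(C_0(\underline S^n(i)); C^*_{red}\Gamma)$ into $\bigoplus_i \R$, the further composite with the quotient $\prod_i \R \to (\prod_i \R)/(\bigoplus_i \R)$ annihilates $\bigoplus_i KK(C_0(\underline S^n(i)); C^*_{red}\Gamma)$, which contains the kernel of the coarsening map. Therefore the whole composite factors through $KX_0(\mathcal{B}^n; C^*_{red}\Gamma)$, as claimed.

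The step I expect to be the main obstacle is pinning down the precise description of the coarsening map and its kernel for $\mathcal{B}^n$ with coefficients — specifically, justifying that a coarsening sequence for $\mathcal{B}^n$ can be chosen so that at the $k$-th stage the first $k-1$ whiskered spheres have been collapsed onto rays, and that "collapsing a whiskered sphere onto its whisker" induces an isomorphism (indeed, it does not quite: it kills the sphere's contribution, which is all we need), so that the $k$-th term of the direct system is $\prod_{i\ge k} KK(C_0(\underline S^n(i)); C^*_{red}\Gamma)$ up to the ray contributions. This is parallel to the computation of $HX_n(B^n)$ in Proposition~\ref{p:B} and of $KX_*(\mathcal{B}^n; A)$ implicit in the proof of Proposition~\ref{p:coarseass}, and should be handled the same way, via a Mayer--Vietoris or direct-limit argument together with the vanishing of the locally finite $K$-homology of a ray; the rest of the proof is formal.
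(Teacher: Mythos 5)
Your overall strategy is the paper's: the paper disposes of this lemma in one line, as an immediate consequence of the coarsening-sequence calculation in the proof of Proposition~\ref{p:coarseass}, and your sketch correctly identifies that calculation (collapsing the first whiskered spheres onto rays, vanishing of the locally finite $K$-homology of a ray, componentwise nature of the trace map) as the substance of the matter. However, there is a logical slip in how you assemble the pieces. To factor the composite $f\colon K_0^{lf}(\mathcal{B}^n;C^*_{red}\Gamma)\to(\prod_i\R)/(\bigoplus_i\R)$ through the coarsening map $c$, you need $\ker c\subseteq\ker f$. Your route to this is $\ker c\subseteq\bigoplus_i KK(C_0(\underline{S}^n(i));C^*_{red}\Gamma)\subseteq\ker f$; the second inclusion is indeed immediate from componentwise-ness, but the first is exactly the inclusion you do \emph{not} prove. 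The direction you call ``immediate'' and claim is ``all we need'' is the reverse one, $\bigoplus\subseteq\ker c$, and that inclusion by itself gives nothing: if $\ker c$ were strictly larger than the direct sum, an element of $\ker c$ could still have nonzero image under $f$ and the factorization would fail. So as literally written, the deduction in your final paragraph asserts the unproven containment.

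The fix is precisely the computation you defer to your ``main obstacle'' paragraph: choose a coarsening sequence for $\mathcal{B}^n$ whose $k$-th nerve has its first components properly homotopy equivalent to rays and the remaining ones to the whiskered spheres, so that the $k$-th term of the direct system is (up to the vanishing ray contributions) $\prod_{i\geq l(k)}K_0^{lf}(\underline{S}^n(i);C^*_{red}\Gamma)$ with structure maps the projections. This identifies $KX_0(\mathcal{B}^n;C^*_{red}\Gamma)$ with $\bigl(\prod_i K_0^{lf}(\underline{S}^n(i);C^*_{red}\Gamma)\bigr)/\bigl(\bigoplus_i K_0^{lf}(\underline{S}^n(i);C^*_{red}\Gamma)\bigr)$ and $c$ with the quotient map, whence $\ker c$ equals the direct sum and your argument closes; equivalently, one checks that $f$ is compatible with the structure maps of this direct system (passing one stage changes only finitely many coordinates of $\prod_i\R$, which die in the quotient) and hence descends to the colimit. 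Either way, the ``hard'' identification is indispensable, not optional, and it is exactly what the paper means by citing the calculation in the proof of Proposition~\ref{p:coarseass}; you should delete the claim that the easy inclusion suffices and promote the deferred computation to the body of the proof.
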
 
This is an immediate consequence of the calculation appearing in the proof of Proposition~\ref{p:coarseass}.

The following proposition concludes the proof that $\alpha(M) \neq 0$. 
\begin{prop}\label{p:last}
The composition 
\[
    K_0( M ) \stackrel{\omega}{\lra} 
    KX_0 (\mathcal{B}^n ; C^*_{red} \Gamma  ) \lra (\prod_i \R) / (\bigoplus_i \R)
\]
sends the $K$-theoretic fundamental class of $M$ to the element represented by the sequence
$(z_1, z_2, \ldots )$. 
%index of the Dirac operator on $M_i$ twisted with the 
%virtual bundle with compact support 
%\[
%   f_{1/i}^*( [E - \underline{\C}^k] )  \to M_{1/i} \, . 
%\]
\end{prop}
%Recall that  $f_{1/i}   \colon M_{1/i} \lra S^n$ is the $\frac{1}{i}$-contracting map whose existence was
%guaranteed by the assumption that $M$ is $K$-theory enlargeable. 

Recall that $z_i \in \Z$ is  defined as the $K$-theoretic degree of the map 
\[
   f_{1/i}  \colon M_{1/i} \lra S^n 
\]
and non-zero by assumption. Hence, the sequence given 
in Proposition~\ref{p:last} represents a non-zero class in the quotient $(\prod \R) / (\bigoplus \R)$ 
and the proof of Theorem~\ref{t:main} is complete modulo the proof of Proposition~\ref{p:last}.   

To prove this Proposition, we  first need to recall a form of Atiyah's 
$L^2$-index theorem. Let $\Gamma$ be an arbitrary (countable) discrete
group. We consider the composition
\[
   K_*(B \Gamma) \stackrel{\mu}{\lra} K_*(C^*_{red} 
   \Gamma) \stackrel{\tau}{\lra} \R   \, , 
\]
where the first map is the Baum--Connes assembly map and the second map 
is induced by the canonical trace $\tau \colon   C^*_{red} \Gamma \to \C$. 
Now the homological form of the $L^2$-index theorem reads as follows. The
elegant proof in~\cite{CM} applies without change. 

\begin{prop}\label{p:atiyah} 
The composition $\tau_* \circ \mu$ is equal to the 
map 
\[
    K_*(B\Gamma) \longrightarrow K_*(pt.)\iso\Z \hookrightarrow \R,
\]
induced by mapping $B\Gamma$ to a point (recall that $K_*(B\Gamma)$ denotes
$K$-homology with compact support). Of course, given a (not necessarily
compact) manifold $X$ and a map $c\colon X\to B\Gamma$, $\tau_*\circ\mu\circ
c_*\colon K_*(X)\to K_*(\C)$ is then equal to the map induced by the
projection $X\to pt.$
\end{prop}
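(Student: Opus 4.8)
The plan is to prove Proposition~\ref{p:atiyah} by reducing to the classical $L^2$-index theorem of Atiyah~\cite{MFA} via a homological/naturality argument, following the method of Connes--Moscovici~\cite{CM}. First I would observe that both $\tau_* \circ \mu$ and the claimed augmentation map $K_*(B\Gamma) \to K_*(pt.) \cong \Z \hookrightarrow \R$ are natural transformations of homology theories on the category of (CW-)spaces over $B\Gamma$, or equivalently natural transformations out of $K_*(-)$ (compactly supported $K$-homology), evaluated on $B\Gamma$. Since $K$-homology with compact supports is generated, via the Atiyah--Hirzebruch / bordism description, by fundamental classes $[N] \to B\Gamma$ of closed $\Spc$-manifolds $N$ mapping to $B\Gamma$ (after inverting the Bott element, or working rationally, but in fact integrally the Conner--Floyd / Baas--Sullivan type argument suffices because $\Spc$ bordism surjects onto $ko_*$ and $k_*$ in the relevant range), it is enough to check the identity on such classes. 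So the key step is: for every closed $\Spc$-manifold $N$ of dimension matching the parity, and every map $c\colon N \to B\Gamma$, one has $\tau_*(\mu(c_*[N]_K)) = \langle \widehat{A}(N) \cdot \text{(Todd correction)}, [N]\rangle$, i.e.\ the ordinary index of the $\Spc$-Dirac operator on $N$, which is exactly the image of $[N]_K$ under the map induced by $N \to pt.$

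Second, I would identify $\mu(c_*[N]_K) \in K_*(C^*_{red}\Gamma)$ with the index of the $\Spc$-Dirac operator on $N$ twisted by the Mishchenko bundle $\widetilde N \times_{c_*\pi_1 N} C^*_{red}\Gamma$ (or, pulling back $E\Gamma$, the flat $C^*_{red}\Gamma$-bundle associated to $c$) — this is the standard description of the Baum--Connes assembly map recalled already in the introduction of this paper via $KK$-theory, cf.~\cite{Bl}. Applying the canonical trace $\tau$ then computes the von Neumann / $L^2$-index of this twisted operator on the $\Gamma$-cover $\widehat N := c^* E\Gamma$ of $N$ determined by $c$. At this point Atiyah's $L^2$-index theorem~\cite{MFA} applies verbatim: the $\Gamma$-trace index of a $\Gamma$-invariant elliptic operator on a free cocompact $\Gamma$-manifold equals the ordinary index of the corresponding operator downstairs, here the $\Spc$-Dirac index on $N$, which is an integer and equals the value of the map $K_*(N) \to K_*(pt.) \cong \Z$ on $[N]_K$. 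This matches the right-hand side of the claimed formula.

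Third, for the non-compact version stated in the last sentence of the proposition, I would note that compactly supported $K$-homology $K_*(X)$ of a non-compact manifold is the colimit over compact subsets $Q \subset X$ of $K_*(Q)$, and all the maps involved ($c_*$, $\mu$, $\tau_*$, and the projection to a point) commute with this colimit and with the inclusions $Q \hookrightarrow X$; hence the non-compact case follows formally from the compact case applied to each $Q$ (or, after choosing a $\Spc$-representative, to closed manifolds mapping into $X$). This is exactly the "of course" remark in the statement and requires no new input beyond functoriality of assembly and of the trace-induced map, both of which are recorded earlier in the excerpt.

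The main obstacle, and the only genuinely substantive point, is the passage from an arbitrary class in $K_*(B\Gamma)$ to a geometric $\Spc$-Dirac representative for which Atiyah's theorem is literally stated: one must know that compactly supported $K$-homology of $B\Gamma$ is spanned (rationally, which suffices since the target is $\R$, or integrally via $\Spc$-bordism) by classes of the form $c_*[N]_K$ with $N$ a closed $\Spc$-manifold, and that the identity of the two natural transformations on the geometric generators propagates to all of $K_*(B\Gamma)$ by additivity and naturality. The authors dispatch this by invoking~\cite{CM}, where exactly this reduction is carried out and where it is observed that the proof "applies without change"; accordingly my proposal is to cite that argument rather than reproduce the bordism-theoretic bookkeeping, since nothing in our setting (an arbitrary countable $\Gamma$, the reduced $C^*$-algebra, and the canonical trace) differs from theirs.
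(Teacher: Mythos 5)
Your proposal is correct and takes essentially the same route as the paper: the paper gives no independent argument either, but simply states that ``the elegant proof in~\cite{CM} applies without change,'' which is precisely the reduction you outline (geometric $\Spc$-Dirac cycles, Atiyah's $L^2$-index theorem for the Mishchenko-twisted index, then naturality and colimits over compact subsets for the non-compact statement) before deferring to the same reference for the substantive step. One small correction: \cite{CM} is Chatterji--Mislin, not Connes--Moscovici.
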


\begin{proof}[Proof of Proposition~\ref{p:last}]
It is enough to show that under the composition 
\[
      K_0 ( M ) \lra KK(C_0 (\mathcal{B}^n)  ; C^*_{red} \Gamma  ) 
     \lra \prod_i K_0( C^*_{red} \Gamma )  \lra \prod_i \R
\]
the fundamental class is sent to the sequence $(z_1, z_2, z_3, \ldots)$. This will 
follow from the fact that the composition 
\begin{eqnarray*}
    K_0(M) & \lra & KK ( C_0 (M_{1/i}) ; C^*_{red} \Gamma)  \\
          & \stackrel{(f_{1/i} )_*}{\lra} & KK ( C_0 (\underline{S}^n) ; C^*_{red} \Gamma) \\    
          & \stackrel{- \cap [E - \underline{\C}^k]}{\lra} & KKR( \underline{S}^n  ; C^*_{red} \Gamma) \\
          & \stackrel{\epsilon}{\lra} & KK(\C ; C^*_{red} \Gamma) \stackrel{\tau}{\longrightarrow}  \R  \\
\end{eqnarray*}
sends the $K$-theoretic fundamental class to $z_i$ (the first map is a 
composition of the transfer map and the slant product with the Mishchenko 
bundle). 

First we observe that  the preceding composition is equal to the composition 
\begin{eqnarray*}
   K_0(M) & \stackrel{tr}{\longrightarrow} & K_0^{lf}(M_{1/i}) = KK(C_0(M_{1/i}) ; \C) \\ 
          & \stackrel{- \cap f_{1/i}^*([E - \underline{\C}^k]) }{\longrightarrow} & K_0(M_{1/i} ) \\
          & \stackrel{- \cap [L]}{ \longrightarrow } & KKR(M_{1/i}; C^*_{red} \Gamma) \\
          & \stackrel{\epsilon}{\longrightarrow} & KK(\C ; C^*_{red} \Gamma)
          \stackrel{\tau}{\longrightarrow} \R \, .
\end{eqnarray*} 

Now the composition 
\[
  K_0(M_{1/i} ) \stackrel{- \cap [L]}{\lra} KKR(M_{1/i} ; C^*_{red} \Gamma) \stackrel{\epsilon}{\lra} KKR(\C ; C^*_{red} \Gamma) 
  = K_0(C^*_{red} \Gamma_i) 
\]
is (by one possible definition of $\mu$) equal to the composition 
\[
  K_0(M_{1/i}) \stackrel{c_*}{\lra} K_0(B\Gamma) \stackrel{\mu}{\lra} K_0(C^*_{red} \Gamma) \, . 
\]
Here $c$ is the composition
\[
    c \colon M_i \lra B\Gamma_i\to B\Gamma 
\]
where the first map classifies the universal cover of $M_i$ and the second is
induced from the canonical inclusion.

Therefore, using Proposition~\ref{p:atiyah}, we need 
to show that the composition 
\[
\begin{CD}
  K_0(M) @>{tr}>> K_0^{lf} (M_{1/i}) @>{- \cap
    f_{1/i}^*([E - \underline{\C}^k]) }>> K_0(M_{1/i} ) @>{\epsilon}>> K_0(pt. ) =
  \Z\\
  && @VV{(f_{1/i})_*}V @VV{(f_{1/i})_*}V @VV{=}V\\
  && K_0(S^n,S) @>{_\cap [E-\underline{\C}^k]}>> K_0(S^n) @>{\epsilon}>> K_0(pt.)
  \end{CD}
\]
sends the $K$-theoretic fundamental class of $M$ to $z_i$. But this assertion is 
immediate. 
\end{proof}

\bigskip

\end{document}